\documentclass[11pt]{article}\textwidth 160mm\textheight 235mm
\oddsidemargin-2mm\evensidemargin-2mm\topmargin-10mm
\usepackage{amsfonts}
\usepackage{amssymb}
\usepackage{graphicx}
\usepackage{amsmath}
\usepackage{amsthm}
\usepackage{enumitem}
\usepackage{tikz}
\usepackage{mathrsfs}
\usepackage{cancel}
\usepackage{todonotes}
\usepackage{cases}
\usetikzlibrary{matrix}
\usetikzlibrary{arrows,shapes}
\usepackage{cite}
\usepackage{bm}
\usepackage{hyperref}
\hypersetup{colorlinks=true, urlcolor=blue}
\expandafter\let\expandafter\oldproof\csname\string\proof\endcsname
\let\oldendproof\endproof
\renewenvironment{proof}[1][\proofname]{%
  \oldproof[\ttfamily \scshape \bf #1. ]%
}{\oldendproof}
\def\A{{\mathscr{A}}}
\def\U{{\cal U}}

\def\O{{\cal O}}
\def\C{{\cal C}}
\def\M{{\cal M}}
\def\P{{\bf P}}
\def\cP{{\cal P}}
\def\Z{{\cal Z}}
\def\B{\mathbb{B}}
\def\R{{\mathbb R}}
\def\oR{\overline{\R}}
\def\N{{\rm I\!N}}
\def\oi{{i_0}}

\def\ox{{\bar x}}

\def\oy{{\bar y}}

\def\ov{{\bar v}}

\def\op{{\bar p}}

\def\X{{\bf X}}
\def\Y{{\bf Y}}
\def\Prob{{\mathbb P}}

\def \b{{\}_{k\in\N}}}

\def\L{{\mathcal{L}}}
\def\D{{\mathscr{D}}}
\def\I{{\cal I}}

\def\xk{x^k}

\def\uk{u^k}
\def\vk{v^k}


\def\what{\widehat}


\def\tto{\rightrightarrows}

\def\prox{{\rm prox}\,}

\def\tto{\rightrightarrows}

\def\sub{\partial}

\def\ra{\rangle}
\def\la{\langle}
\def\ve{\varepsilon}
\def\omu{\bar{\mu}}

\def\olm{{\bar\lambda}}

 \def\para{{\rm par}\,}
\def\dd{\delta}

\def\th{\theta}
\def\vt{\vartheta}

\def\ph{\varphi}
\def\opsi{{\overline{\psi}}}
\def\hph{\widehat\varphi}

\def\toset_#1{\xrightarrow{#1}}
\DeclareMathOperator*{\mini}{minimize\;}
\DeclareMathOperator*{\argmin}{argmin\;}
\DeclareMathOperator*{\argmax}{argmax\;}
\DeclareMathOperator*{\E}{\mathbb{E}}
\def\Z{{\sf Z}}
\def\d{{\rm d}}
\def\dist{{\rm dist}}
\def\spann{{\rm span}\,}
\def\rge{{\rm rge\,}}

\def\Var{\mathbb{V}{\rm ar}\,}

\def\ri{{\rm ri}\,}

\def\tr{{\rm tr}\,}
\def\inte{{\rm int}\,}
\def\gph{{\rm gph}\,}
\def\epi{{\rm epi}\,}
\def\dim{{\rm dim}\,}
\def\dom{{\rm dom}\,}

\def\ker{{\rm ker}\,}
\def\sign{{\rm sgn}\,}

\def\aff{{\rm aff}\,}

\def\sm{\hbox{${1\over 2}$}}


\def\rN{{\what{N}}}


\begin{document}
\vspace*{0.5in}
\begin{center}
{\bf A FRESH LOOK INTO VARIATIONAL ANALYSIS OF\\ $\C^2$-PARTLY SMOOTH FUNCTIONS}\\[2ex]
NGUYEN T. V. HANG\footnote{School of Physical and Mathematical Sciences, Nanyang Technological University, Singapore 639798. Current address: University of Engineering and Technology, Vietnam National University, Hanoi, Vietnam (ntvhang@vnu.edu.vn). Research of this author is partially supported by  Singapore National Academy of Science via SASEAF Programme under the grant RIE2025 NRF International Partnership Funding Initiative.} 
and   EBRAHIM SARABI\footnote{Department of Mathematics, Miami University, Oxford, OH 45065, USA (sarabim@miamioh.edu). Research of the second author is partially supported by the U.S. National Science Foundation  under the grant DMS 2108546.
}
\end{center}
\vspace*{0.05in}

\small{\bf Abstract.} 
$\C^2$-partial smoothness of functions has been  an important subject of   research in optimization, on both theoretical and algorithmic aspects, since it was first introduced by Lewis in \cite{Lew02}.
Our work aims at providing a fresh variational analysis viewpoint on the class of $\C^2$-partly smooth functions.
Namely, we explore the relationship between  $\C^2$-partial smoothness and  strict twice epi-differentiability and demonstrate that functions from the latter class are always strictly twice epi-differentiable. On the other hand, we provide two examples to show that the opposite conclusion does not hold in general.  
As a consequence of our analysis, we calculate the second subderivative of
$\C^2$-partly smooth functions. 
Applications to stability analysis of related generalized equations involving a general perturbation and to asymptotic analysis of the well-known sample average approximation method for stochastic programs with $\C^2$-partly smooth regularizers are also given.
 \\[1ex]
{\bf Keywords.} $\C^2$-partial smoothness, prox-regularity, strict proto-differentiability, differential stability, (strong) metric regularity, sample average approximation method. \\[1ex]
{\bf Mathematics Subject Classification (2000)} 49J52, 65K10, 90C15, 90C31

\newtheorem{Theorem}{Theorem}[section]
\newtheorem{Proposition}[Theorem]{Proposition}
\newtheorem{Lemma}[Theorem]{Lemma}
\newtheorem{Corollary}[Theorem]{Corollary}

\numberwithin{equation}{section}

\theoremstyle{definition}
\newtheorem{Definition}[Theorem]{Definition}
\newtheorem{Example}[Theorem]{Example}
\newtheorem{Remark}[Theorem]{Remark}
\newtheorem{Assumption}[Theorem]{Assumption}

\renewcommand{\thefootnote}{\fnsymbol{footnote}}

\normalsize

\section{Introduction}\label{sect:intro}

This paper is a continuation of our recent work \cite{HaS23} on strict proto-differentiability of subgradient mappings and its applications in stability analysis of generalized equations, with an emphasis on {\em $\C^2$-partly smooth} functions.
The notion of partial smoothness, first formalized in \cite{Lew02} by Lewis, expresses an underlying smooth structure of a nonsmooth function on a smooth manifold and  offers a powerful framework for both stability and algorithmic analysis of optimization problems.  Therefore, there  has been 
considerable efforts to study   various theoretical aspects (cf.  \cite{BuE20,DDL14,DIL16,Har04, HaL04, LeZ13, MiS03, Wyl19}) as well as   numerical advantages (cf. \cite{BIM23, BuE20,DHM06,Har04, LeW21,LFP14, LFP17, Wyl19})
of $\C^2$-partly smooth functions.

Partial smoothness is widely satisfied by important classes of functions in optimization and variational analysis, including the polyhedral and pointwise maximum functions \cite{Lew02}, $\C^2$-decomposable functions \cite{Sha03}, and spectral  functions \cite{Lew02, DDL14}.
Calculus rules for this property,  established in \cite[Section~4]{Lew02} and \cite[Chapter~3]{Har04}, furthermore prove the abundance of partly smooth functions in these areas.

Motivated by the aforementioned   applications, we plan to study new second-order variational properties of  $\C^2$-partly smooth functions. More precisely, we follow our recent developments in  \cite{HJS22, HaS22, HaS23}, where novel second-order variational properties, namely strict twice epi-differntiability and    strict proto-differentiability, of important classes of composite functions were brought to the surface.
We also paid special attention to  continuous differentiability  of the proximal mappings and characterizations of regularity properties of certain generalized equations.
Similar results about the proximal mappings were established for $\C^2$-partly smooth functions in \cite[Theorem~28]{DHM06} and \cite[Corollaries~4.4.2 and~4.4.3]{Wyl19}.
In addition, in \cite[Section~5]{HaS23}, strict twice epi-differentiability was verified for a subclass of $\C^2$-partly smooth functions, called therein  reliably $\C^2$-decomposable functions; see Example~\ref{ex:decomp} for the definition of this class of functions.
These inspire us to explore further the relationships between  $\C^2$-partial smoothness and  strict twice epi-differentiability of functions.
We should add here that strict twice epi-differentiability was defined   by Rockafellar and Poliquin in \cite{pr} and was further studied for prox-regular functions in \cite{pr2}. In particular, a characterization of this concept in term of epigraphical convergence of the second subderivative of prox-regular functions was achieved in \cite[Theorem~4.1]{pr2}, which will be leveraged in this paper to show  under a relative interior condition
for the chosen subgradient that $\C^2$-partly smooth functions are always strictly twice epi-differentiable in a neighborhood of a given point relative to the graph of subgradient mappings. Moreover, our proof allows us to calculate the second subderivative of $\C^2$-partly smooth functions. We also provide two examples to show that there are strictly twice epi-differentiable functions that are not $\C^2$-partly smooth. 
Not only does this enable us to extend the scope of our developments in \cite{HaS23} to a larger class of functions but also provides   a fresh viewpoint to investigate such important functions in optimization and variational analysis.
We then concentrate on stability analysis of a generalized equation related to subgradient mapping of a $\C^2$-partly smooth function under a general perturbation and provide sufficient conditions for its solution mapping to have a single-valued Lipschitz continuous graphical localization.  Using this, we  analyze asymptotic behavior of the sequence, generated by a sample average approximation (SAA) method, for certain regularized stochastic programs.

The rest of the paper is organized as follows. 
Section~\ref{sect:Pre} recaps definitions of important concepts used in this paper.
Section~\ref{sect:sTED} begins with reviewing the notion   of partial smoothness. 
We then present our main findings about strict twice epi-differentiability of $\C^2$-partly smooth functions and regularity properties of the corresponding subgradient mappings.
As important consequences, we establish next in this section $\C^1$-smoothness and active manifold identifiability of proximal mappings as well as differential stability for generalized equations involving a general perturbation.
The last section, Section~\ref{sect:asym}, concerns asymptotic analysis of an SAA method for stochastic programs with $\C^2$-partly smooth regularizers.

\section{Tools of Variational Analysis}\label{sect:Pre}
Throughout, suppose that $\X$ and $\Y$ are finite dimentional Hilbert spaces.
We denote by $\B$ the closed unit ball in the space in question and by $\B_r(x):= x+r\B$ the closed ball centered at $x$ with radius $r>0$.
Given a nonempty set $C\subset \X$,
the symbols $\ri C$, $\aff C$, $C^*$, and $\para C$  signify its relative interior, affine span, polar cone, and the linear subspace parallel to $\aff C$, respectively.
The indicator function $\delta_C$ of the set $C$ is defined by $\delta_C(x)= 0$ for $x\in C$ and $\delta_C(x)=\infty$ for $x\in \X\setminus C$.
We denote by $P_C$ the projection mapping onto $C$ and by $\dist(x, C)$ the distance between $x\in \X$ and the set $C$.
The domain, range, and graph of a set-valued mapping $F:\X\tto\Y$ are defined, respectively, by $\dom F=\{x\in \X\, \big|\, F(x)\neq\emptyset\}$, $\rge F = \{y\in \Y\, \big|\, \exists\, x\in \X \,\textrm {with }\, y\in F(x)\}$, and $\gph F = \{(x, y)\in \X\times \Y\, \big|\, y\in F(x)\}$.
Consider an extended-real-valued function $\varphi:\X \to \oR:=\R\cup \{\pm\infty\}$.
We denote by $\epi \varphi$ its epigraph set given by $\{(x, \alpha)\in \X\times \R\, \big|\, \alpha \geq \varphi(x)\}$. 
Assuming that $\varphi$ is twice differentiable at $\ox$ with Hessian $\nabla^2\varphi(\ox)$, we also use the same notation $\nabla^2\varphi(\ox)$ to denote the self-adjoint linear operator of $\X$ associated with the symmetric bilinear form $\nabla^2\varphi(\ox):\X\times\X\to \R$.

Let $\{C^t\}_{t>0}$ be a parameterized family of sets in $\X$. Its inner and outer limit sets are defined, respectively,  by 
\begin{align*}
\liminf_{t\searrow 0} C^t&= \big\{x\in \X\, \big|\, \forall \; t_k \searrow 0 \;\exists \; x^{t_k}\to x \;\;\mbox{with}\;\; x^{t_k}\in C^{t_k}\; \; \mbox{for}\; \;  k\;\; \mbox{sufficiently large}\big\},\\
\limsup_{t\searrow 0} C^t&= \big\{x\in \X\, \big|\, \exists \; t_k \searrow 0 \;\exists\;   \; x^{t_k}\to x \;\;\mbox{with}\;\; x^{t_k}\in C^{t_k}\big\};
\end{align*}
see \cite[Definition~4.1]{rw}. 
The limit set of $\{C^t\}_{t>0}$ exists if  $\liminf_{t\searrow 0} C^t=\limsup_{t\searrow 0} C^t =:C$,  written as $C^t \to C$ when $t\searrow 0$. 
A sequence $\{f^k\b$ of functions $f^k:\X \to \oR$ is said to {epi-converge} to a function $f:\X \to \oR$ if we have $\epi f^k\to \epi f$ in $\X\times \R$ as $k\to \infty$; see \cite[Definition~7.1]{rw} for more details on the epi-convergence of a sequence of extended-real-valued functions. 
We denote by $f^k\xrightarrow{e} f$ the  epi-convergence of  $\{f^k\b$ to $f$.

Given a nonempty set $\Omega\subset\X $ with $\ox\in \Omega$, the tangent cone to $\Omega$ at $\ox$, denoted $T_\Omega(\ox)$,  is defined  by
$T_\Omega(\ox) = \limsup_{t\searrow 0} \frac{\Omega - \ox}{t}.$
The  regular/Fr\'{e}chet normal cone $\rN_\Omega(\ox)$ to $\Omega$ at $\ox$ is defined by
 $\rN_\Omega(\ox) = T_\Omega(\ox)^*$. 
For $x\notin \Omega$, we set $\rN_\Omega(x)=\emptyset$. 
The limiting/Mordukhovich normal cone $N_\Omega(\ox)$ to $\Omega$ at $\ox$ is the collection of all vectors $\ov\in \X $ for which there exist sequences  $\{x^k\b$ and  $\{v^k\b$ with $v^k\in \rN_\Omega( x^k)$ such that $(x^k,v^k)\to (\ox,\ov)$. 
When $\Omega$ is convex, both normal cones boil down to that of convex analysis.  
Given a function $f:\X  \to \oR$ and a point $\ox\in\X $ with $f(\ox)$ finite, the subderivative function $\d f(\ox)\colon\X \to\oR$ is defined by
\begin{equation*}
\d f(\ox)(w)=\liminf_{\substack{
   t\searrow 0 \\
  w'\to w
  }} {\frac{f(\ox+tw')-f(\ox)}{t}}, \quad w\in \X.
\end{equation*}
A vector $v\in \X $ is called a subgradient of $f$ at $\ox$ if $(v,-1)\in N_{\epi f}(\ox,f(\ox))$. 
The set of all subgradients of $f$ at $\ox$ is denoted by $\sub f(\ox)$. 
Replacing the limiting normal cone with $\rN_{\epi f}(\ox,f(\ox))$ in the definition of $\sub f(\ox)$ gives us $\what \sub f(\ox)$,  the regular subdifferential of $f$ at $\ox$.  
The function $f$ is called subdifferentially regular at $\ox$ provided that $N_{\epi f}(\ox, f(\ox)) = \rN_{\epi f}(\ox, f(\ox))$.
By definition, subdifferential regularity of $f$ at $\ox$ particularly ensures the coincidence $\sub f(\ox) = \what \sub f(\ox)$.
The critical cone of $f$ at $\ox$ for $\bar v$ with $\bar v\in   \sub f(\ox)$ is defined by 
\begin{equation*}
{K_f}(\ox,\bar v)=\big\{w\in \X \,\big|\,\la\bar v,w\ra=\d f(\ox)(w)\big\}.
\end{equation*}
When $f=\dd_\Omega$ for a nonempty subset $\Omega\subset\X$, the critical cone of $\dd_\Omega$ at $\ox$ for $\ov$ is denoted by $K_\Omega(\ox,\ov)$. 
In this case, one has $\d \dd_\Omega(\ox)=\dd_{T_\Omega(\ox)}$, the above definition of the critical cone of a function therefore boils down to  the well-known concept of critical cone to a set (see \cite[page~109]{DoR14}), 
namely $K_\Omega(\ox,\ov)=T_\Omega(\ox)\cap [\ov]^\perp$, where $[v]$ denotes the one-dimensional subspace $\{tv\, |\, t\in \R\}$.
If $f$ is subdifferentially regular and $\partial f(\ox) \neq \emptyset$, then $\d f(\ox)$ is the support function of $\partial f(\ox)$ (cf. \cite[Theorem~8.30]{rw}) and the critical cone $K_f(\ox, \ov)$ can be equivalently described by
\begin{equation}\label{crif}
K_f(\ox, \ov) = N_{\partial f(\ox)}(\ov).
\end{equation}
The {second subderivative} of $f$ at $\ox$ for $\ov$, denoted $\d^2 f(\bar x , \ov)$, is an extended-real-valued function defined  by 
\begin{equation*}
\d^2 f(\bar x , \ov)(w)= \liminf_{\substack{
   t\searrow 0 \\
  w'\to w
  }} \Delta_t^2 f(\ox , \ov)(w'),\;\; w\in \X,
\end{equation*}
where $\Delta_t^2 f(\ox , \ov),\, t>0,$ is the parametric  family of second-order difference quotients of $f$ at $\ox$ for $\ov\in \sub f(\ox)$ defined by 
$$
\Delta_t^2 f(\bar x , \ov)(w)=\frac{f(\ox+tw)-f(\ox)-t\langle \ov,\,w\rangle}{\frac {1}{2}t^2}
$$
for any $w\in \X$ and $t>0$. According to \cite[Definition~13.6]{rw},   $f$ is said to be {twice epi-differentiable} at $\bar x$ for $\ov$
if  the functions $ \Delta_t^2 f(\bar x , \ov)$ epi-converge to $  \d^2 f(\bar x,\ov)$ as $t\searrow 0$.  Further, we say that $f$ is {strictly} twice epi-differentiable at $\bar x$ for $\ov$ if the functions $  \Delta_t^2 f(  x , v)$ epi-converge  to some function as $t\searrow 0$ and $(x,v)\to (\ox,\ov)$ with $f(x)\to f(\ox)$ and $(x,v)\in \gph \sub f$.  
If this condition holds, the limit function is then the second subderivative $  \d^2 f(\bar x,\ov)$.

Consider a set-valued mapping $F:\X\tto \Y$. According to \cite[Definition~8.33]{rw}, the {graphical derivative} of $F$ at $\ox$ for $\oy$ with $(\ox,\oy) \in \gph F$ is the set-valued mapping $DF(\ox, \oy): \X\tto \Y$ defined via the tangent cone to $\gph F$ at $(\ox, \oy)$ by $\gph DF(\ox, \oy) = T_{\gph F}(\ox, \oy)$.  
Using the definition of the tangent cone, we can alternatively represent $\gph DF(\ox, \oy)$ in terms of graphical limit as
\begin{equation}\label{proto}
\gph DF(\ox, \oy) = \limsup_{t\searrow 0}\frac{\gph F-(\ox,\oy)}{t}.
\end{equation}
The set-valued mapping $F$ is said to be {proto-differentiable} at $\ox$ for $\oy$ if the outer graphical limit in \eqref{proto} is actually a full limit. 
When $F(\ox)$ is a singleton consisting of $\oy$ only, the notation $DF(\ox, \oy)$ is simplified to $DF(\ox)$. 
It is easy to see that for a single-valued mapping $F$, which is differentiable at $\ox$,  the graphical derivative  $DF(\ox)$ boils down to the Jacobian of $F$ at $\ox$, denoted by $\nabla F(\ox)$. 
Recall from \cite[Definition~9.53]{rw} that the strict graphical derivative of  a set-valued mapping $F$ at $\ox$ for $\oy$ with $(\ox,\oy)\in \gph F$, is  the set-valued mapping $\widetilde D F(\ox,\oy):\X\tto \Y$, defined  by 
\begin{equation}\label{sproto}
\gph \widetilde D F(\ox,\oy)=\limsup_{
  \substack{
   t\searrow 0 \\
  (x,y)\xrightarrow{ \gph F}(\ox,\oy)
  }} \frac{\gph F-(x,y)}{t}.
\end{equation}
We say that $F$ is {strictly} proto-differentiable at $\ox$ for $\oy$ if the outer graphical limit in \eqref{sproto} is attained as a full limit, or, equivalently, if its graph $\gph F$ is strictly smooth at $(\ox, \oy)$, cf. \cite[page~173]{r85}.
It is obvious from definitions that $\gph DF(\ox, \oy) \subset \gph \widetilde D F(\ox,\oy)$ in general.
Strict proto-differentiability of $F$ at $\ox$ for $\oy$ particularly implies that the latter  inclusion must hold as equality, that is $\widetilde D F(\ox, \oy)$ and $D F(\ox, \oy)$ coincide.

\section{Strict Twice Epi-Differentiability of $\C^2$-Partly Smooth Functions}\label{sect:sTED}
This section aims at establishing new second-order variational properties of $\C^2$-partly smooth functions as well as their applications in stability analysis of related variational systems.
We begin with reviewing  this notion and some of its well-known consequences obtained in \cite{Lew02, LeZ13}. 
Let $\M\subset \X$ be a $\C^2$-smooth manifold of codimension $m\leq \dim \X$ around the point $\ox \in \M$.
Equivalently, that means we can find an open neighborhood $\O\subset \X$ of $\ox$ and a  $\C^2$-smooth mapping $\Phi: \O \to \R^m$ such that 
\begin{equation}\label{mfold}
\M\cap \O = \big\{x\in \O\, \big|\, \Phi(x) = 0\big\} \quad \textrm{ and } \quad \nabla\Phi(\ox): \X \to \R^m\; \textrm{ is surjective},
\end{equation}
see, e.g., \cite[Proposition~5.16]{lee}.
We should stress that the surjectivity of $\nabla \Phi(\ox)$, assumed above, yields the validity of the same property of $\nabla \Phi(x): \X \to \R^m$  for all $x$ in an open neighborhood of $\ox$.
Thus, we can assume without loss of generality that $\nabla \Phi(x)$ is surjective for all $x\in \M\cap \O$.
It is well known (cf. \cite[Example~6.8]{rw}) that the tangent and normal cones to $\M$ at $x\in \M\cap \O$ are linear subspaces of $\X$, given by $T_\M(x) = \ker \nabla \Phi(x)$ and $N_\M(x) = \rge \nabla \Phi(x)^*$, respectively.
We should point out (cf. \cite[page~707]{Lew02}) that these two orthogonal complement linear subspaces are independent of the choice of $\Phi$, and that the critical cone to $\M$ at $x$ for $y$ enjoys the relationship 
\begin{equation}\label{criM}
K_\M(x, y) = T_\M(x)
\end{equation}
for all $x\in \M\cap \O$ and $y\in N_\M(x)$; see \eqref{crif} for the definition of the critical cone.

\begin{Definition}[$\C^2$-partial smoothness, cf. {\cite[Definition~3.2]{LeZ13}}]\label{def:ps}
A function $f: \X\to\oR$ is said to be $\C^2$-partly smooth at $\ox$ relative to a set $\M\subset \X$ containing $\ox$ if $\M$ is a $\C^2$-smooth manifold around $\ox$ and the following properties hold:
\begin{enumerate}[noitemsep,topsep=2pt]
\item (restricted smoothness) $f|_\M$ is $\C^2$-smooth around $\ox$, that is, there exists a representative function $\widehat f :\X\to \R$ which is $\C^2$-smooth around $\ox$ with $\widehat f |_\M=f|_\M$ locally around $\ox$;
\item (regularity) at every point $x\in \M$ close to $\ox$, the function $f$ is subdifferentially regular and has nonempty subdifferential $\partial f(x)$;
\item (normal sharpness) $N_\M(\ox) = \para\{\partial f(\ox)\}$;
\item (subgradient continuity) the subgradient mapping $\partial f$ is continuous at $\ox$ relative to $\M$ in the sense that the limit set $\lim_{x\in \M,\, x\to \ox}\sub f(x)$ exists and equals to $\sub f(\ox)$.
\end{enumerate}
The manifold $\M$ is often referred to as the {\em active manifold} at the point $\ox$, and also as the {\em optimal manifold} when $\ox$ is actually a minimizer of the function $f$.
\end{Definition}
Observe that  not only does a $\C^2$-partly smooth function  behave smoothly along some manifold according to  condition (a), but also satisfies regularity and normal sharpness properties in items (b)--(d).
These additional requirements create a strong bond between the function and its active manifold.
In fact, the active manifold associated with a $\C^2$-partly smooth function at a given point is locally unique in the presence of a suitable additional regularity, see the discussion prior to Proposition~\ref{prop:pscons}.
Note also that conditions (a), (b), and $\M$ being a $\C^2$-smooth manifold in the above definition are properties required in a neighborhood of the point $\ox$ in question.
On the other hand, the normal sharpness condition (c) was proved to be stable in the sense that if the function $f$ is $\C^2$-partly smooth at $\ox$ relative to $\M$, then the normal sharpness property in Definition~\ref{def:ps}(c) must be satisfied at all nearby points $x\in \M$ (cf. \cite[Proposition~2.10]{Lew02}).
Thus, if continuity of the subgradient mapping $x\in \M\mapsto \sub f(x)$ in (d) is maintained around $\ox$, then $\C^2$-partial smoothness of $f$ at $\ox$ relative to $\M$ yields the same property at all points $x\in \M$ nearby.
A localized version of that condition with respect to a dual variable $\ov \in \sub f(\ox)$,  which is absent from the definition above,  will be justified  in Proposition~\ref{prop:cont} to hold around $\ox$.
In the sequel, we refer to the properties described in Definition~\ref{def:ps} with condition (d) being replaced by 
\begin{enumerate}[noitemsep,topsep=2pt]
\item[(d')] there exists a neighborhood $V$ of $\ov$ such that $\lim_{x\in \M,\, x\to \ox}(V\cap \sub f(x)) = V\cap\sub f(\ox)$
\end{enumerate}
 as the {\em $\C^2$-partial smoothness of $f$ at $\ox$ for $\ov$}.
 Note also that this modified version of the $\C^2$-partial smoothness is stable (cf. Proposition~\ref{prop:cont}) and suffices for second-order variational analysis in this paper, since we are mainly concerned with pairs $(x,v)$ in  $\gph \sub f$.

The class of $\C^2$-partly smooth functions is known to encompass important functions that often appear in constrained and composite optimization problems, as mentioned in Section~\ref{sect:intro}.
We particularly refer the interested readers to \cite{Har06} for examples of $\C^2$-partly smooth functions and relations between partial smoothness structure and other smooth substructures of nonsmooth functions.
Below we briefly discuss the conditions outlined in Definition~\ref{def:ps} for some important classes of functions.

\begin{Example}[reliably $\C^2$-decomposable functions]\label{ex:decomp}
Following \cite{Sha03}, a function $f: \X\to \oR$ is called $\C^2$-decomposable at $\ox \in \X$ if $f(\ox)$ is finite and $f$ enjoys the composite representation
\begin{equation}\label{comp}
f(x) = f(\ox) + (\vt\circ\Phi)(x)\quad \textrm { for } x\in \O,
\end{equation}
where $\O\subset \X$ is an open neighborhood of $\ox$, $\vt: \Y\to \oR$ is proper, lsc, and sublinear, and $\Phi: \O\to \Y$ is $\C^2$-smooth with $\Phi(\ox) = 0$. 
Denote by $S$ the subspace of $\Y$ parallel to $\aff\{\partial \vt(0)\}$, namely $S = \para\{\partial \vt(0)\}$.
It was shown in \cite[pp.~683--684]{Sha03} that under the following nondegeneracy condition
\begin{equation}\label{nondeg}
S \cap \ker \nabla\Phi(\ox)^* = \{0\},
\end{equation}
the composite function $f$ in \eqref{comp} is $\C^2$-partly smooth at $\ox$ relative to the $\C^2$-smooth manifold $\M = \big\{ x\in \X\, \big|\, (P_S\circ\Phi)(x)=0\big\}$ with $\widehat f(x) = \la y, \Phi(x)\ra$, $x\in \X$, where $y\in \partial \vt (0)$ can be chosen arbitrarily. 
Moreover,  it is argued therein that subdifferential continuity in Definition~\ref{def:ps}(d) holds around the point $\ox$ relative to $\M$.
In \cite[Section~5]{HaS23}, a composite function, satisfying the local representation in \eqref{comp}  and the nondegeneracy condition in \eqref{nondeg}, is called reliably $\C^2$-decomposable at $\ox$.
Also, it was verified in \cite[Example~5.1]{HaS23} that this class encompasses a variety of important functions including polyhedral functions, certain composite functions, and indicator functions of important cones such as the second-order cone and the cone of semidefinite matrices.
\end{Example}

\begin{Example}[spectral functions]\label{ex:spectral}
Let $\X= \R^{n\times m}$, the space of $n\times m$ real matrices equipped with the trace inner product $\la X, Y\ra := \tr(X^\top Y)$ for all $X, Y \in \R^{n\times m}$.
Recall that singular values of a matrix $X\in \R^{n\times m}$ are the square roots of the eigenvalues of $X^\top X$.
Denote by $\sigma(X)$ the vector of $m$ singular values of a given matrix $X\in\R^{n\times m}$ with components $\sigma_1(X)\geq \sigma_2(X)\geq\cdots\geq\sigma_m(X)\geq 0$.
The correspondence $X\mapsto \sigma(X)$ defines a mapping from $\R^{n\times m}$ to $\R^m$.
In this example, we consider a spectral function $f: \R^{n\times m} \to \oR$ formulated as a composition by $f(X) = (\vt\circ\sigma)(X)$, where the outer function $\vt: \R^m \to \oR$ is lsc and absolutely permutation-invariant, meaning invariant under all signed permutations of coordinates.
Let $\overline{X}\in \R^{n\times m}$ and let $S\subset \R^m$ be a $\C^2$-smooth manifold around $\sigma(\overline{X})$ with $\delta_S$ being absolutely permutation-invariant, and $\M_{S} = \sigma^{-1}(S)$.
It was shown in \cite[Theorem~5.3]{DDL14} that $f$ is $\C^2$-partly smooth at $\overline{X}$ relative to $\M_S$ if and only if $\vt$ is $\C^2$-partly smooth at $\sigma(\overline{X})$ relative to $S$. 
\end{Example}

The next lemma presents a simple observation on normal cone to a convex set that will be employed to analyze next the class of convex piecewise linear-quadratic functions to see under which conditions such functions are $\C^2$-partly smooth.

\begin{Lemma}\label{nor_poly}
    Assume that $\Omega\subset \X$ is a convex set and $\ox\in \Omega$. Then 
    $\para\{\Omega\}^\perp\subset N_\Omega(\ox)$
    and equality holds if and only if $\ox \in \ri \Omega$.
\end{Lemma}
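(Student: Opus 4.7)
The plan is to reduce the statement to standard convex-analytic facts about the normal cone of a convex set and the supporting hyperplane theorem in the relative geometry of $\Omega$. Since $\Omega$ is convex and $\ox\in\Omega$, the limiting normal cone $N_\Omega(\ox)$ coincides with the convex-analytic one: $N_\Omega(\ox)=\{v\in\X\,|\,\la v,x-\ox\ra\le 0 \text{ for all } x\in\Omega\}$. I would open with this remark, so that the rest of the argument works with a single concrete description.

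For the inclusion $\para\{\Omega\}^\perp\subset N_\Omega(\ox)$, I would simply observe that for any $x\in\Omega$ the difference $x-\ox$ lies in $\aff\Omega-\ox=\para\{\Omega\}$. Hence any $v\in\para\{\Omega\}^\perp$ satisfies $\la v,x-\ox\ra=0$ for all $x\in\Omega$, and in particular $v\in N_\Omega(\ox)$. This step is essentially bookkeeping.

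For the ``if'' direction of the equivalence, assume $\ox\in\ri\Omega$ and pick $v\in N_\Omega(\ox)$. Given any $w\in\para\{\Omega\}$, the relative interior property furnishes some $t>0$ such that $\ox\pm tw\in\Omega$. Applying the defining inequality of the normal cone to both $\ox+tw$ and $\ox-tw$ yields $\la v,\pm tw\ra\le 0$, hence $\la v,w\ra=0$. Since $w\in\para\{\Omega\}$ was arbitrary, $v\in\para\{\Omega\}^\perp$, and combined with the first part this gives the equality.

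The main obstacle is the ``only if'' direction, which I would prove by contrapositive: assuming $\ox\notin\ri\Omega$, I would produce a normal vector not in $\para\{\Omega\}^\perp$. Then $\ox$ lies in the relative boundary of $\Omega$, and the supporting hyperplane theorem applied inside the affine hull $\aff\Omega$ yields a nonzero $v\in\para\{\Omega\}$ satisfying $\la v,x-\ox\ra\le 0$ for all $x\in\Omega$. Thus $v\in N_\Omega(\ox)$; but a nonzero vector of $\para\{\Omega\}$ cannot lie in $\para\{\Omega\}^\perp$, so $v\notin\para\{\Omega\}^\perp$, contradicting the assumed equality. The one delicate point is citing the supporting hyperplane theorem in the correct relative form; I would phrase it as an application of the classical theorem to $\Omega$ viewed as a convex subset of the Hilbert space $\aff\Omega$, whose ambient normal cone is exactly $\para\{\Omega\}$.
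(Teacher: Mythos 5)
Your proof is correct, and for the inclusion and the ``if'' direction it is essentially the paper's argument: the inclusion is the same bookkeeping, and your use of $\ox\pm tw\in\Omega$ for $w\in\para\{\Omega\}$ is the same relative-interior computation the paper does with points $\ox+\epsilon u$, $u\in\B\cap\para\{\Omega\}$ (the subspace structure of $\para\{\Omega\}$ making the one-sided inequality an equality in both versions). The genuine difference is in the ``only if'' direction. The paper disposes of it in one line by citing \cite[Proposition~2.51]{mn}, which says that if the normal cone $N_\Omega(\ox)$ is a linear subspace then $\ox\in\ri\Omega$; you instead argue by contraposition, applying the supporting hyperplane theorem inside $\aff\Omega$ to produce a nonzero $v\in\para\{\Omega\}$ with $\la v,x-\ox\ra\le 0$ for all $x\in\Omega$, hence a normal vector outside $\para\{\Omega\}^\perp$. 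Your route makes the lemma self-contained at the cost of a few extra lines, and the relative form of the supporting hyperplane theorem is legitimately available since a nonempty convex set in finite dimensions has nonempty relative interior, so $\ox\notin\ri\Omega$ is indeed a boundary point of $\Omega$ viewed inside $\aff\Omega$; the paper's citation is shorter but leans on an external characterization (which is itself ultimately a separation argument, so the mathematical content is the same). Both are valid; just make sure, if you keep your version, to state the relative supporting hyperplane step with the identification of $\aff\Omega$ with the translated subspace $\para\{\Omega\}$ spelled out, exactly as you indicate.
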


\begin{proof}
    It is clear that $\para\{\Omega\}^\perp\subset N_\Omega(x)$ for all $x\in \Omega$.
    Assume now that $\ox\in \ri\Omega$ and pick $\ov\in N_\Omega(\ox)$.
    There must exist $\epsilon>0$ such that $\B_\epsilon(\ox)\cap \aff\{\Omega\}\subset\Omega$.
    Observe that $$\la\ov, u\ra= \frac{1}{\epsilon}\la \ov, (\ox+\epsilon u)-\ox\ra\leq 0$$ for all $u\in\B\cap \para\{\Omega\}$, which in turn yields $\ov\in \para\{\Omega\}^\perp$. 
    Conversely, if $N_\Omega(\ox) = \para\{\Omega\}^\perp$, a subspace, then we infer via \cite[Proposition~2.51]{mn} that $\ox\in \ri\Omega$.
    The proof is then complete.
\end{proof}

\begin{Example}[convex piecewise linear-quadratic functions]\label{ex:cplq}
Let $\X=\R^n$ and consider a convex piecewise linear-quadratic (CPLQ) function $f$ defined on $\R^n$. 
Recall that a proper function $f: \R^n \to \oR$ is  piecewise linear-quadratic if $\dom f = \bigcup_{i\in {\cal I}} C_i$ with $C_i$ being a polyhedral convex set for any index $i$ from a finite set ${\cal I}$, relative to each of which $f$ takes the form
\begin{equation}\label{cplq}
f(x) = \sm \langle A_i x ,x \rangle + \langle a_i ,x \rangle + \alpha_i, \quad  x \in C_i,
\end{equation}
for some $n \times n$ symmetric matrix $A_i$, vector $a_i\in \R^n$, and scalar $\alpha_i\in \R$; cf. \cite[Definition~10.20]{rw}.
Since $f$ is convex, we know from \cite[Proposition~10.21]{rw} that $\dom f$ is polyhedral convex and $\partial f(x) \neq \emptyset$ for all $x\in \dom f$.
Thus, a CPLQ function always satisfies regularity condition in Definition~\ref{def:ps}(b).
Pick $\ox \in \dom f$ and define the set of active indices of the domain of $f$ at $\ox$ by
\begin{equation*}
{\cal I}(\ox) = \big\{i \in {\cal I}\, \big|\, \ox \in C_i\big\}.
\end{equation*}
We first analyze the simplest case when ${\cal I}(\ox)$ is a singleton, say ${\cal I}(\ox) = \{\oi\}$ for some $\oi \in {\cal I}$.
Since ${\cal I}(x)\subset {\cal I}(\ox)$ for all $x\in \dom f$ sufficiently close to $\ox$, there is a neighborhood of $\ox$ in which the function $f$ behaves as if it is comprised of a single quadratic piece, defined on a polyhedral convex set, namely 
\begin{equation*}
f(x) = \sm \langle A_\oi x ,x \rangle + \langle a_\oi ,x \rangle + \alpha_\oi + \delta_{C_\oi}(x), \quad x\in \R^n.
\end{equation*}
According to \cite[Theorem~18.2]{Roc70}, there exists a unique face of $C_\oi$ containing $\ox$ in its relative interior.
Let $\M_\ox$ be the relative interior of  such a face.
Clearly, $\M_\ox$ coincides with an affine subspace locally around $\ox$, and thus forms a $\C^2$-smooth manifold around that point. This tells us that $f$ restricted to $\M_\ox$ is  a quadratic function and thus is $\C^2$-smooth.
Moreover, observing that
\begin{equation*}
\partial f(x) = A_\oi x + a_\oi +N_{C_\oi}(x) = A_\oi x + a_\oi +N_{C_\oi}(\ox)
\end{equation*}
for all $x\in \M_\ox$, we get the fulfillment of the subdifferential continuitity in Definition~\ref{def:ps}(d).
It is also straightforward to see that $\para \{\partial f(\ox)\} = \spann \{N_{C_\oi}(\ox)\} = N_{\M_\ox}(\ox)$, meaning that the normal sharpness property in Definition~\ref{def:ps}(c) holds.
Thus, if ${\cal I}(\ox)$ is a singleton, we can conclude that $f$ is $\C^2$-partly smooth at $\ox$ relative to $\M_{\ox}$, where $\M_\ox$ is the maximal relatively open subset of $\dom f = C_\oi$ containing $\ox$, cf. \cite[Theorem~18.2]{Roc70}.

We now turn to the general case when ${\cal I}(\ox)$ may not be a singleton. 
Observe from the   analysis above for the case of a single linear-quadratic piece  that the manifold $\M_\ox$ consists of all points $x\in \R^n$ sharing not only a common set of active polyhedra $\{C_\oi\}$ but also the same normal cone $N_{C_\oi}(x)= N_{C_\oi}(\ox)$.
We aim to construct a set $\M_\ox$ enjoying this property   and to examine to what extent the CPLQ function $f$ is $\C^2$-partly smooth at $\ox$ relative to $\M_{\ox}$.

According to the analysis in the proof of \cite[Theorem~11.14(b)]{rw}, we  argue that such a set  $\M_\ox$ can be defined as follows.
Let ${\cal H}$ be a finite collection of closed half-spaces $H = \big\{x\in \R^n\, \big|\, \la b_H, x\ra \leq \beta_H\big\}$, for some $(b_H, \beta_H) \in (\R^n \setminus\{0\})\times \R$, such that 
(i) each of the polyhedral convex sets $\dom f$ and $C_i,\, i\in {\cal I},$ is the intersection of a subcollection of half-spaces in ${\cal H}$;
and (ii) for every $H\in {\cal H}$, the opposite closed half-space $H'= \big\{x\in \R^n\, \big|\, \la -b_H, x\ra \leq -\beta_H\big\}$ is also in ${\cal H}$.
Corresponding to this collection, define for each $\ox \in \dom f$ a subcollection ${\cal H}_\ox = \big\{H\in {\cal H}\, \big|\, \ox\in H\big\}$ and a relatively open set $\M_\ox = \big\{x\in \dom f\, \big|\, {\cal H}_x = {\cal H}_\ox\big\}$.
Thus, we have ${\cal I}(x)={\cal I}(\ox)$ for all $x\in \M_\ox$, and  $N_{C_i}(x)=N_{C_i}(\ox)$ for any $x\in \M_\ox$ and any $i\in {\cal I}(\ox)$. Note also that $\M_\ox$ has the alternative representation 
\begin{equation}\label{mfold1}
\M_\ox = \ri \bigcap_{H\in {\cal H}_\ox}H = \Big(\bigcap_{H\in {\cal H}_\ox^*} H\cap H'\Big)\bigcap\Big(\bigcap_{H\in {\cal H}_\ox \setminus {\cal H}_\ox^*} \inte H\Big),
\end{equation}
where ${\cal H}_\ox^* = \big\{H\in {\cal H}_\ox\, \big|\, H'\in {\cal H}_\ox\big\}$.
This clearly shows that in some neighborhood of $\ox$, $\M_\ox$ coincides with the affine subspace $\bigcap_{H\in {\cal H}_\ox^*} (H\cap H')$.
Hence, $\M_\ox$ forms a $\C^2$-smooth manifold around $\ox$ and satisfies
\begin{equation*}
N_{\M_\ox}(x) = \spann\big\{b_H\,|\, H\in {\cal H}_\ox^*\big\}\quad\textrm{ for all }\; x \in \M_\ox.
\end{equation*}
Fix an index $\oi\in {\cal I}(\ox)$.
Observing that $\M_\ox\cap C_\oi \neq\emptyset$, we get $\M_\ox\subset C_\oi$ and conclude that $f$ restricted to $\M_\ox$ coincides with $\sm \langle A_\oi x ,x \rangle + \langle a_\oi ,x \rangle + \alpha_\oi$ and thus is $\C^2$-smooth.
Recall also from \cite[page~487]{rw} that 
\begin{equation}\label{subcplq}
\partial f(x) = \bigcap_{i\in {\cal I}(\ox)}\big(A_ix+a_i+ N_{C_i}(\ox)\big)
\subset A_\oi x+a_\oi+ N_{\M_\ox}(\ox)
\end{equation}
for all $x\in \M_\ox$, where the last inclusion is due to \cite[Proposition~2.2]{Lew02}.

{\bf Claim.} {\em The CPLQ function $f$ in \eqref{cplq} is $\C^2$-partly smooth at $\ox$ relative to the affine manifold $\M_\ox$ in \eqref{mfold1} if and only if the following conditions are satisfied:
\begin{equation}\label{cplq1}
\begin{cases}
\spann \{N_{C_i}(\ox)\} = N_{\M_\ox}(\ox) \quad \textrm{ for all } \; i\in {\cal I}(\ox),\; \textrm{ and }\\
\bigcap_{i\in {\cal I}(\ox)} (A_i\ox + a_i +\ri N_{C_i}(\ox))\neq \emptyset.
\end{cases}
\end{equation}
}

To verify the ``if" part, assume  that the conditions in \eqref{cplq1} hold. 
Pick $v\in \ri \sub f(\ox)$ and conclude from \eqref{subcplq} and \cite[Proposition~2.42]{rw} that $v\in A_i\ox + a_i +\ri N_{C_i}(\ox)$ for all $i\in {\cal I}(\ox)$. Employing Lemma~\ref{nor_poly} and the first condition in \eqref{cplq1} implies for any  $i \in {\cal I}(\ox)$ that 
$$N_{N_{C_i}(\ox)}(v-A_i\ox-a_i) = \spann \{N_{C_i}(\ox)\}^\perp= T_{\M_\ox}(\ox).$$ 
We then get from  \cite[Corollary~23.8.1]{Roc70} and \eqref{subcplq} that
\begin{equation}\label{cplq2}
N_{\sub f(\ox)}(v) = \sum_{i\in {\cal I}(\ox)} N_{N_{C_i}(\ox)}(v-A_i\ox-a_i) = \sum_{i\in {\cal I}(\ox)} T_{\M_\ox}(\ox) = T_{\M_\ox}(\ox).
\end{equation}
Recalling that $v\in \ri \partial f(\ox)$, we infer from Lemma~\ref{nor_poly} that $N_{\sub f(\ox)}(v) = \para\{\partial f(\ox)\}^\perp$, which, together with \eqref{cplq2}, implies the normal sharpness condition $\para\{\sub f(\ox)\}=N_{\M_\ox}(\ox)$.
It follows from  \cite[Proposition~2.42]{rw} and the second condition in \eqref{cplq1} that the sets
$A_i\ox +a_i+ N_{C_i}(\ox)$ and $\bigcap_{k\in {\cal I}(\ox)\setminus\{i\}} (A_k\ox + a_k + N_{C_k}(\ox))$ have relative interior points in common for all $i\in \I(\ox)$.
These amount to none of the sets $A_i\ox + a_i +N_{C_i}(\ox)$ being separated properly from the intersection $\bigcap_{k\in {\cal I}(\ox)\setminus\{i\}} (A_k\ox + a_k + N_{C_k}(\ox))$ of the others, cf. \cite[Theorem~11.3]{Roc70}.
Employing \cite[Exercise~4.33]{rw}, we can then deduce from the   representation of $\partial f(x)$ in \eqref{subcplq} that $\partial f$ is continuous at $\ox$ relative to $\M_\ox$. Thus, $f$ is $\C^2$-partly smooth at $\ox$ with respect to $\M_\ox$, defined in \eqref{mfold1}.

We now turn to the ``only if" part.
First observe from \eqref{subcplq} and the inclusions $\M_\ox\subset C_i, \, i\in \I(\ox),$ that 
\begin{equation}\label{eq90}
\para\{\partial f(\ox)\} \subset \spann \{N_{C_i}(\ox)\}\subset N_{\M_\ox}(\ox), \quad i\in \I(\ox).
\end{equation}
The normal sharpness condition  $\para\{\sub f(\ox)\}=N_{\M_\ox}(\ox)$ then yields  the first condition in \eqref{cplq1} for all $i\in {\cal I}(\ox)$.
We prove next that $v\in A_i\ox + a_i +\ri N_{C_i}(\ox)$ for all $i\in {\cal I}(\ox)$.
Assume that $v - A_\oi\ox -a_\oi \notin\ri N_{C_\oi}(\ox)$ for some $\oi\in {\cal I}(\ox)$.
We then get from $\spann \{N_{C_\oi}(\ox)\}^\perp = T_{\M_\ox}(\ox)$ and Lemma~\ref{nor_poly} that $T_{\M_\ox}(\ox)\subsetneq N_{N_{C_\oi}(\ox)}(v-A_\oi\ox-a_\oi)$.
The first equality in \eqref{cplq2}, which is always valid due to     \eqref{subcplq}, therefore leads to $T_{\M_\ox}(\ox)\subsetneq N_{\sub f(\ox)}(v) = N_{\M_\ox}(\ox)^\perp$, a contradiction.
Thus, we arrive at $v\in \bigcap_{i\in I(\ox)} \big(A_i\ox + a_i + \ri N_{C_i}(\ox)\big)$ and complete verification of the claim.

Observe that conditions required in \eqref{cplq1} automatically hold when ${\cal I}(\ox)$ is a singleton, as analyzed above.
However, \eqref{cplq1} should not be taken for granted in general.
A simple counterexample is given by $f(x) = \tfrac{1}{2}(\max\{x, 0\})^2, \, x\in \R$. 
It can be seen that $f$ is $\C^1$-smooth everywhere but not $\C^2$-partly smooth at $\ox=0$ relative to $\M_\ox = \{0\}$, since the normal sharpness property does not hold: $\para\{\nabla f(\ox)\} = \{0\}$ while $N_{\M_\ox}(\ox) = \R$.
\end{Example}

It is noticed, in our forthcoming analysis, that subdifferential regularity in Definition~\ref{def:ps}(b) should be replaced with a stronger condition called {\em prox-regularity}.
In fact, this was well observed in the literature on partial smoothness.
For example, in \cite[Section~7]{Lew02}, Lewis constructed an example of an everywhere subdifferentially regular function that is $\C^2$-partly smooth  relative to two distinct manifolds.
It was then pointed out in \cite[Corollary~4.12]{LeZ13} that such a concern is eliminated for prox-regular functions.
More precisely, the additional prox-regularity ensures local uniqueness of the corresponding active manifolds, and hence, it does have appealing effects on $\C^2$-partly smooth functions, see, e.g., \cite[Sections~4 and 5]{LeZ13}.
Prox-regularity was therefore a key ingredient in exploiting the partial smoothness structure, see, e.g., \cite{DDL14, DHM06, DIL16}.
Also, prox-regularity and subdifferential continuity of the function in question were essential assumptions throughout the analysis carried out in \cite{HaS23} on strict proto-differentiability of subgradient mappings.
Among appealing implications of those two properties, we should mention the equivalence between (strict) twice epi-differentiability and (strict) subgradient proto-differentiability; cf. \cite[Theorem~13.40]{rw} and \cite[Corollary~6.2(b)]{pr}.
These will be extensively exploited in our establishment of strict twice epi-differentiability of a $\C^2$-partly smooth function and its consequences in this section.
We now recall the notion of such regularity.
According to \cite[Definition~13.27]{rw}, a function  $f\colon\X\to\oR$ is {prox-regular} at $\ox$ for $\ov$ if $f$ is finite at $\ox$ and locally lower semicontinuous (lsc)  around $\ox$ with $\ov\in\sub f(\ox)$, and there exist $\epsilon>0$ and $\rho\geq 0$ such that
\begin{equation}\label{prox}
\begin{cases}
f(u)\ge f(x')+\la v',u-x'\ra-\frac{\rho}{2}\|u-x'\|^2 \quad\textrm{ for all } \; u\in \B_\epsilon(\ox)\\
\textrm{whenever } \; (x', v')\in \B_{\epsilon}(\ox,\ov)\cap\gph\sub f\; \textrm{ and }\; f(x') \leq f(\ox) +\epsilon.
\end{cases}
\end{equation}
When this property holds for all $v\in \sub f(\ox)$, the function $f$ is called prox-regular at $\ox$.
The function $f$ is said to be {subdifferentially continuous} at $\ox$ for $\ov$ if the convergence $(x^k,v^k)\to(\ox,\ov)$ with $v^k\in\sub f(x^k)$ yields $f(x^k)\to f(\ox)$ as $k\to\infty$.
The localization in function value in \eqref{prox} can be omitted if $f$ is subdifferentially continuous at $\ox$ for $\ov$.

\begin{Proposition}\label{local_prox}
Assume that $f\colon\X\to\oR$ is prox-regular and subdifferentially continuous at $\ox$ for $\ov$. Then, there exists $\ve>0$ such that 
$f$ is prox-regular and subdifferentially continuous at $x$ for $v$
for any $(x, v)\in \B_\ve(\ox,\ov)\cap \gph \sub f$.
\end{Proposition}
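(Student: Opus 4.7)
The plan is to derive both conclusions directly from the prox-regularity inequality at $(\ox,\ov)$ by a simple localization argument. First I would invoke the subdifferential continuity of $f$ at $\ox$ for $\ov$ to eliminate the function-value constraint $f(x')\le f(\ox)+\epsilon$ that appears in \eqref{prox}: since $f(x')\to f(\ox)$ along every sequence $(x',v')\to(\ox,\ov)$ with $v'\in\sub f(x')$, after shrinking $\epsilon$ if necessary, there exist $\epsilon>0$ and $\rho\ge 0$ such that
\begin{equation*}
f(u)\ge f(x')+\la v',u-x'\ra-\tfrac{\rho}{2}\|u-x'\|^2
\end{equation*}
for every $u\in\B_\epsilon(\ox)$ and every $(x',v')\in\B_\epsilon(\ox,\ov)\cap\gph\sub f$, with no remaining function-value restriction.

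Next I would pick any $\ve\in(0,\epsilon/2)$ and fix an arbitrary $(x,v)\in\B_\ve(\ox,\ov)\cap\gph\sub f$. With $\epsilon':=\epsilon-\ve>0$, every $u\in\B_{\epsilon'}(x)$ lies in $\B_\epsilon(\ox)$ and every $(x',v')\in\B_{\epsilon'}(x,v)\cap\gph\sub f$ lies in $\B_\epsilon(\ox,\ov)\cap\gph\sub f$, so the displayed inequality immediately yields prox-regularity at $x$ for $v$ with parameters $(\epsilon',\rho)$. To obtain subdifferential continuity at $x$ for $v$, I would take any sequence $(x^k,v^k)\to(x,v)$ with $v^k\in\sub f(x^k)$: local lower semicontinuity of $f$ around $\ox$, which is built into the prox-regularity hypothesis, gives $\liminf_k f(x^k)\ge f(x)$, while applying the same displayed inequality with $u=x$ and $(x',v')=(x^k,v^k)$, legitimate for large $k$, yields
\begin{equation*}
f(x)\ge f(x^k)+\la v^k,x-x^k\ra-\tfrac{\rho}{2}\|x-x^k\|^2,
\end{equation*}
whence $\limsup_k f(x^k)\le f(x)$, so $f(x^k)\to f(x)$ as required.

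The only technical point I anticipate is the nested-radii bookkeeping needed to ensure that $\B_{\epsilon'}(x)\subset\B_\epsilon(\ox)$ and $\B_{\epsilon'}(x,v)\subset\B_\epsilon(\ox,\ov)$ simultaneously, and that the function-value constraint really can be absorbed at the start; once these are set up correctly, no deeper variational-analytic tool is needed beyond the definitions recalled in Section~\ref{sect:Pre} and the standing hypotheses on $f$.
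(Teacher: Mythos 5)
Your argument is correct, and its main engine is the same as the paper's: localize the prox-regularity inequality from $(\ox,\ov)$ to nearby graph points by nesting balls, using subdifferential continuity at $\ox$ for $\ov$ to neutralize the function-value restriction in \eqref{prox} (your choice to absorb that restriction once and for all at the outset is exactly the remark the paper makes just before the proposition, whereas the paper invokes it mid-proof; either bookkeeping works). Where you genuinely diverge is the second conclusion: the paper obtains subdifferential continuity of $f$ at $x$ for $v$ by citing Poliquin--Rockafellar \cite[Proposition~2.3]{pr}, while you prove it directly, getting $\limsup_k f(x^k)\le f(x)$ from the prox-regularity inequality with $u=x$ and $(x',v')=(x^k,v^k)$, and $\liminf_k f(x^k)\ge f(x)$ from the local lower semicontinuity built into prox-regularity. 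This buys a self-contained, elementary proof at the cost of one small point of care: ``locally lsc'' in the sense of \cite[Definition~1.33]{rw} only gives closedness of the level sets $\{u\in\B_\epsilon(\ox)\mid f(u)\le\alpha\}$ for $\alpha\le f(\ox)+\epsilon$, so your liminf step should note that any putative value $\alpha$ strictly between $\liminf_k f(x^k)$ and $f(x)$ can indeed be taken in this admissible range (which it can, because your limsup bound and the choice of $\ve$ force $f(x)$ to lie within $\epsilon$ of $f(\ox)$); with that observation the argument is complete and fully equivalent in strength to the paper's.
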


\begin{proof} 
Pick the constants $\epsilon$ and $\rho$ for which \eqref{prox} holds. 
Since $f$ is subdifferentially continuous at $\ox$ for $\ov$,
we find $\ve\in (0, \epsilon/2)$ such that $|f(x) - f(\ox)|< \epsilon$ for any $(x,v)\in \B_\ve(\ox,\ov)\cap \gph \sub f$. 
Pick any $(x,v)\in \B_{\ve}(\ox,\ov)\cap \gph \sub f$.
It is easy to see that the prox-regularity inequality in \eqref{prox} with the same constant $\rho$ is satisfied for all $u\in \B_{\epsilon/2}(x)$ and all $(x', v')\in \B_{\epsilon/2}(x,v)\cap\gph\sub f$.
This proves prox-regularity of $f$ at $x$ for $v$. 
Shrinking $\ve$ if necessary, we deduce from \cite[Proposition~2.3]{pr} that $f(x')\to f(x)$ whenever $(x',v')\to (x,v)$ with $(x',v')\in \B_{\ve}(\ox,\ov)\cap \gph \sub f$, which proves subdifferential continuity of $f$ at $x$ for $v$ and completes the proof.
\end{proof}


The following result summarizes important properties of $\C^2$-partly smooth functions obtained in \cite{Lew02, LeZ13}.
While the local representation of graphs of $\C^2$-partly smooth functions in \eqref{ps2} below is the cornerstone in our justification of strict twice epi-differentiability of such functions, stability of the relative interior condition plays a key role in extending the latter point-based property to all points nearby.
The latter stability can be derived from \cite[Lemma~20]{DHM06}, however, the equivalence between our statement and the latter lemma is probably not apparent.
We instead supply a direct proof for stability of the relative interior condition based on arguments in the proof of \cite[Proposition~4.3]{LeZ13}.

\begin{Proposition}\label{prop:pscons}
Let $f:\X\to \oR$ be a $\C^2$-partly smooth function at $\ox$ relative to a $\C^2$-smooth manifold $\M$. 
Let $\widehat f : \X\to \R$ be any $\C^2$-smooth representative of $f$ around $\ox$ relative to $\M$. 
The following assertions hold.
\begin{enumerate}[noitemsep,topsep=2pt]
\item We have $\nabla \widehat f  (\ox)\in \aff\{\partial f(\ox)\}$.

\item Suppose, moreover, that $f$ is prox-regular and subdifferentially continuous at $\ox$ for some $\ov \in \ri \partial f(\ox)$. Then there exists $\varepsilon > 0$ such that
\begin{equation}\label{ps2}
\B_\varepsilon(\ox, \ov)\cap \gph \partial f = \B_\varepsilon(\ox, \ov)\cap \gph (\nabla \widehat f  + N_\M)
\end{equation}
and $v\in \ri \partial f(x)$ for all $(x, v) \in \B_\varepsilon(\ox, \ov)\cap \gph \partial f$.
\end{enumerate}
\end{Proposition}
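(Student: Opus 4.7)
The plan is to treat part (a) as a short direct consequence of normal sharpness and the agreement $f|_\M = \widehat f|_\M$, and to derive part (b) from a pointwise representation combined with an active manifold identification argument driven by the hypothesis $\ov \in \ri \sub f(\ox)$.

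For part (a), first I would establish the inclusion $\sub f(\ox) \subset \nabla \widehat f(\ox) + N_\M(\ox)$. Fix any $v \in \sub f(\ox)$. Since $f|_\M = \widehat f|_\M$ around $\ox$ and $\M$ is $\C^2$-smooth, approaching $\ox$ along smooth curves in $\M$ with tangent $w \in T_\M(\ox)$ yields $\d f(\ox)(w) = \langle \nabla \widehat f(\ox), w\rangle$. Subdifferential regularity combined with \cite[Theorem~8.30]{rw} provides $\langle v, w\rangle \le \d f(\ox)(w)$, which applied to both $w$ and $-w$ forces $v - \nabla \widehat f(\ox) \in T_\M(\ox)^\perp = N_\M(\ox)$. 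Taking affine hulls gives $\aff\{\sub f(\ox)\} \subset \nabla \widehat f(\ox) + N_\M(\ox)$, and normal sharpness $\para\{\sub f(\ox)\} = N_\M(\ox)$ forces both sides to be affine subspaces of the same dimension parallel to $N_\M(\ox)$, hence equal. In particular $\nabla \widehat f(\ox) \in \aff\{\sub f(\ox)\}$.

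For part (b), my first step is to extend the inclusion from (a) to a uniform neighborhood: Proposition~\ref{local_prox} propagates prox-regularity and subdifferential continuity, while \cite[Proposition~2.10]{Lew02} propagates normal sharpness, together supplying some $\delta>0$ such that $\sub f(x) \subset \nabla \widehat f(x) + N_\M(x)$ with $\aff\{\sub f(x)\} = \nabla \widehat f(x) + N_\M(x)$ at every $x \in \M \cap \B_\delta(\ox)$. Combined with the manifold identification step discussed below, this immediately yields the $(\subseteq)$ inclusion in \eqref{ps2}. For the reverse $(\supseteq)$ inclusion, I would exploit condition (d$'$) of $\C^2$-partial smoothness of $f$ at $\ox$ for $\ov$: since $\ov \in \ri \sub f(\ox)$ and $\aff\{\sub f(\ox)\}$ equals $\nabla \widehat f(\ox) + N_\M(\ox)$, inner semicontinuity of $\sub f|_\M$ together with the fact that these affine hulls have constant dimension along $\M$ forces every $v \in (\nabla \widehat f(x) + N_\M(x)) \cap \B_\varepsilon(\ov)$ with $x \in \M \cap \B_\varepsilon(\ox)$ to lie in $\sub f(x)$; the same argument simultaneously delivers $v \in \ri \sub f(x)$, settling the relative interior persistence claim.

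The main obstacle is the active manifold identification step, which is where the hypothesis $\ov \in \ri \sub f(\ox)$ is genuinely used: one must rule out sequences $(x_k, v_k) \to (\ox, \ov)$ in $\gph \sub f$ with $x_k \notin \M$. Following the strategy of \cite[Proposition~4.3]{LeZ13}, I would argue by contradiction, exploiting prox-regularity to obtain a quadratic lower bound on $f$ based at $(x_k, v_k)$, subdifferential continuity to pass $f(x_k) \to f(\ox)$, and crucially the relative interior condition, which coupled with normal sharpness produces a rich enough family of test subgradients to contradict the pointwise inclusion $\sub f(x) \subset \nabla \widehat f(x) + N_\M(x)$ once $x_k$ leaves $\M$.
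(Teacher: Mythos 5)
Your part (a) and your treatment of the relative interior persistence are essentially sound: the paper disposes of (a) by citing \cite[Proposition~2.4]{Lew02}, while your direct argument via the support-function description of $\d f(\ox)$ and normal sharpness recovers the same fact; and your separation scheme (separate a point of $\nabla\widehat f(x)+N_\M(x)$ that escapes $\sub f(x)$, or lies on its relative boundary, by a unit vector in $\para\{\sub f(x)\}=N_\M(x)$, pass to the limit, and use $\ov\in\ri\sub f(\ox)$ together with continuity of $\sub f|_\M$ to contradict $\bar\eta\neq 0$) is exactly the mechanism the paper borrows from \cite[Proposition~4.3]{LeZ13} for the ``$v\in\ri\sub f(x)$'' claim. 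Spelled out, that same scheme would also deliver your ``$\supseteq$'' inclusion; as written, ``constant dimension of the affine hulls forces every $v$ near $\ov$ to lie in $\sub f(x)$'' is not yet an argument, but it is repairable.

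The genuine gap is the manifold identification step, i.e.\ the ``$\subseteq$'' half of \eqref{ps2}: ruling out sequences $(x^k,v^k)\to(\ox,\ov)$ in $\gph\sub f$ with $x^k\notin\M$. This is the substantive content of the representation, and the paper does not prove it at all --- it quotes \cite[Corollary~5.2]{LeZ13} (see also Remark~\ref{sub_cont} and \cite[Proposition~10.12]{DrL12}), which rests on the finite identification theorem of Hare--Lewis \cite{HaL04} combining prox-regularity, subdifferential continuity, and $\ov\in\ri\sub f(\ox)$. Your sketch of this step does not work as described: the inclusion $\sub f(x)\subset\nabla\widehat f(x)+N_\M(x)$ that you propose to contradict is only established for $x\in\M$, so a sequence with $x^k\notin\M$ cannot contradict it; and \cite[Proposition~4.3]{LeZ13}, whose ``strategy'' you invoke, is a separation/stability result for the relative interior condition along $\M$, not an identification result --- it presupposes that the points already lie on the manifold. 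A correct argument here needs the genuinely nontrivial identification machinery (roughly, that normal sharpness plus $\ov\in\ri\sub f(\ox)$ yields linear growth of $f-\la\ov,\cdot\ra$ transverse to $\M$, which prox-regularity and $f(x^k)\to f(\ox)$ render incompatible with $v^k\in\sub f(x^k)$, $v^k\to\ov$, unless $x^k\in\M$ eventually). Either supply that argument in full or cite it, as the paper does; without it the proof of \eqref{ps2} is incomplete.
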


\begin{proof}
First, assertion (a) follows immediately from \cite[Proposition~2.4]{Lew02}.
We now turn to (b).
Since the function $f$ is $\C^2$-partly smooth at $\ox$ relative to $\M$ and both prox-regular and subdifferentially continuous at $\ox$ for $\ov \in \ri \partial f(\ox)$, the local representation of $\gph \partial f$ in \eqref{ps2}  results from \cite[Corollary~5.2]{LeZ13}.

It remains to show that the relative interior condition is stable in some neighborhood of $(\ox, \ov)$ relative to $\gph \partial f$. 
The proof of this part mimics that of \cite[Proposition~4.3]{LeZ13}. 
Assume the contrary that there exists a sequence $\{(\xk, \vk)\}_{k\in \N}\subset \gph \partial f$ with $\xk \to \ox$ and $\vk \to \ov$ as $k\to\infty$ and $\vk \notin \ri \partial f(\xk)$ for all $k$. 
The local representation in  \eqref{ps2} then particularly tells us that eventually $x^k \in \M$.
Recall that $f$ is subdifferentially regular at all point $x\in \M$ close to $\ox$.
We can assume that $\partial f(\xk)$ is closed and convex for all sufficiently large $k\in \N$.
Due to stability of the normal sharpness property as noted above, we can also assume that $\para\{\partial f(\xk)\} = N_\M(\xk)$ for all such $k$. 
Employing now the separation theorem for the nonempty closed convex set $\partial f(\xk)$ and a point $\vk$ from its relative boundary (cf. \cite[Exercise~2.45(e)]{rw}),  we find $\eta^k \in \para\{\partial f(\xk)\} = N_\M(\xk)$ with $\|\eta^k\|=1$ such that $\la \eta^k, \vk\ra \geq \la \eta^k, u\ra$ for all $u\in \partial f(\xk)$. 
Let $\bar\eta$ be any accumulation point of $\{\eta^k\}_{k\in \N}$.
Clearly, $\bar\eta \neq 0$.
Moreover, we conclude via robustness of the limiting normal cone that $\bar\eta \in N_\M(\ox) = \para\{\partial f(\ox)\}$.
Recall that $\ov\in \ri \partial f(\ox)$.
There must exist $\epsilon > 0$ such that $\B_\epsilon(\ov)\cap \aff\{\partial f(\ox)\} \subset \partial f(\ox)$.
Pick an arbitrary $u\in \B_\epsilon(\ov)\cap \aff\{\partial f(\ox)\}$.
Since $u\in \partial f(\ox)$, we can find $\uk \to u$ with $\uk \in \partial f(\xk)$ for all $k$ sufficiently large due to subdifferential continuity of $\partial f$ at $\ox$ relative to $\M$ and to the facts that $\xk \to \ox$ and $\xk\in\M$ as $k\to\infty$. 
Noting that $\la \eta^k, \vk\ra \geq \la \eta^k, \uk\ra$ for all sufficiently large $k$, we arrive at $\la \bar\eta, \ov\ra \geq \la \bar\eta, u\ra$.
Since $u\in \partial f(\ox)$ is chosen arbitrarily from $\B_\epsilon(\ov)\cap \aff\{\partial f(\ox)\}$, the latter implies that $\la \bar \eta, u\ra \leq 0$ for all $u \in \epsilon\B\cap \para\{\partial f (\ox)\}$,
a contradiction with $0\neq \bar\eta \in \para\{\partial f(\ox)\}$. The proof is then complete.
\end{proof}

\begin{Remark}\label{sub_cont} We should point out that the subdifferential continuity
assumption in Proposition~\ref{prop:pscons} can be dropped from this result with no harm. Indeed, one can conclude from \cite[Proposition~10.12]{DrL12} that the crucial representation in \eqref{ps2} does hold without the latter condition. Having this result in our disposal, we can drop the subdifferential continuity
assumption from all the results in this paper.  However, we proceed with assuming it to avoid more complication in our presentation.
\end{Remark}

\begin{Proposition}[continuity of subgradient mapping]\label{prop:cont}
Let $f:\X\to \oR$ be a $\C^2$-partly smooth function at $\ox$ relative to a $\C^2$-smooth manifold $\M$ such that $f$ is prox-regular and subdifferentially continuous at $\ox$ for some $\ov \in \ri \partial f(\ox)$. 
Then there exists a neighborhood $V$ of $\ov$ such that the subgradient mapping $x\in \M\mapsto V\cap \sub f(x)$ is continuous around $\ox$.
\end{Proposition}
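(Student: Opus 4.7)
The plan is to reduce continuity of $V\cap\sub f$ on $\M$ near $\ox$ to continuity of the smoothly varying object $\nabla\what f+N_\M$, which is supplied by the local representation in Proposition~\ref{prop:pscons}(b). I begin by applying that proposition to a $\C^2$-smooth representative $\what f$ of $f$ near $\ox$, obtaining $\varepsilon>0$ for which
\begin{equation*}
\B_\varepsilon(\ox,\ov)\cap\gph\sub f=\B_\varepsilon(\ox,\ov)\cap\gph(\nabla\what f+N_\M).
\end{equation*}
I then pick a neighborhood $V$ of $\ov$ (for instance the open ball of radius $\varepsilon/2$ about $\ov$) together with a neighborhood $U$ of $\ox$ with $U\times V\subset\B_\varepsilon(\ox,\ov)$, ensuring that
\begin{equation*}
V\cap\sub f(x)=V\cap(\nabla\what f(x)+N_\M(x)),\qquad x\in U\cap\M.
\end{equation*}

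The second step is to prove that $x\in\M\mapsto \nabla\what f(x)+N_\M(x)$ is continuous at every $x\in U\cap\M$. Continuity of $x\mapsto\nabla\what f(x)$ is clear from $\C^2$-smoothness of $\what f$. For the normal bundle, writing $\M$ locally as $\{\Phi=0\}$ with $\Phi$ of class $\C^2$ and $\nabla\Phi$ surjective near $\ox$, we have $N_\M(x)=\rge\nabla\Phi(x)^*$; the orthogonal projector onto this subspace, $\nabla\Phi(x)^*(\nabla\Phi(x)\nabla\Phi(x)^*)^{-1}\nabla\Phi(x)$, depends continuously on $x$, so $x\mapsto N_\M(x)$ is continuous (i.e., both inner and outer semicontinuous) on $\M$ near $\ox$, and hence so is the sum.

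In the third step I transfer this continuity through the intersection with $V$. For inner semicontinuity at $x\in U\cap\M$: given $v=\nabla\what f(x)+n\in V\cap\sub f(x)$ with $n\in N_\M(x)$ and any sequence $x^k\to x$ in $\M$, inner semicontinuity of $N_\M$ furnishes $n^k\in N_\M(x^k)$ with $n^k\to n$; then $v^k:=\nabla\what f(x^k)+n^k\to v$ and, by openness of $V$, $v^k\in V\cap\sub f(x^k)$ for all sufficiently large $k$. For outer semicontinuity: any $v^k\in V\cap\sub f(x^k)$ converging to $v$ satisfies $v^k-\nabla\what f(x^k)\in N_\M(x^k)$ with limit $v-\nabla\what f(x)\in N_\M(x)$ by outer semicontinuity of $N_\M$, whence $v\in\nabla\what f(x)+N_\M(x)=\sub f(x)$ locally.

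The only delicate point I anticipate is that outer semicontinuity only places $v$ in $\overline{V}$ rather than $V$; this is handled by choosing $V$ to be a sufficiently small open ball and invoking continuity in the natural set-valued sense at each interior point $x\in U\cap\M$, which is the mild and only genuine obstacle in the argument.
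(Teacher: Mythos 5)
Your proposal is correct and follows essentially the same route as the paper's proof: invoke the local graph representation from Proposition~\ref{prop:pscons}(b), take $V$ a small ball around $\ov$, use continuity of $x\mapsto N_\M(x)=\rge\nabla\Phi(x)^*$ (from surjectivity of $\nabla\Phi$ and smoothness) to lift convergence through $\nabla\what f+N_\M$, and conclude. The only difference is that you also spell out the outer semicontinuity half, which the paper leaves implicit since the graph of $\nabla\what f+N_\M$ is locally closed.
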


\begin{proof}
According to Proposition~\ref{prop:pscons}, the local representation of $\gph \sub f$ in \eqref{ps2} with $\B_\ve(\ox, \ov)$ replaced with $\B_\ve(\ox)\times \B_\ve(\ov)$ is valid for some $\ve>0$ and some $\C^2$-smooth function $\widehat f$.
Set $V:=\B_{\ve/2}(\ov)$ and pick $x\in  \B_{\ve/2}(\ox)\cap \M$, $v\in V\cap \sub f(x)$, and $\xk \to x$ with $\xk\in \M$.
We now construct a sequence $v^k$ converging to $v$ with $v^k \in \sub f(\xk)$ for all large $k$.
It follows from \eqref{ps2} that $v-\nabla \widehat f(x)\in N_\M(x)$.
Due to surjectivity of $\nabla\Phi(\ox)$ and smoothness of $\Phi$ around $\ox$, the normal cone mapping $x\mapsto N_\M(x) = \rge\nabla\Phi(x)^*$ is continuous around $\ox$.
Choosing a smaller $\ve$ if necessary, we can assume that $N_\M$ is continuous at $x$ and then, for the given sequence $\{\xk\}_{k\in \N}$, we find $y^k \to v-\nabla \widehat f(x)$ with $y^k\in N_\M(\xk)$ for all $k$.
Let $v^k:= \nabla \widehat f(\xk) +y^k$ and observe that $v^k \to v$.
We then have $\xk \in \B_\ve(\ox)$ and $v^k \in \B_\ve(\ov)$, for all $k$ sufficiently large, and thus arrive at $(\xk, v^k)\in (
\B_\ve(\ox)\times \B_\ve(\ov))\cap \gph(\nabla \widehat f+N_\M)$ for such $k$.
Again, it follows from \eqref{ps2} that $v^k\in \sub f(\xk)$ for all $k$ sufficiently large.
This confirms continuity of the localization of subgradient mapping $\sub f$ around $\ox$ relative to $\M$ and completes the proof.
\end{proof}

Below, we present  our main result in this section in which strict twice epi-differentiability of $\C^2$-partly smooth functions is justified. Taking a $\C^2$-partly smooth function at $\ox$ relative to $\M$, denoted $f$, and assuming $\widehat f : \X\to \R$ as a $\C^2$-smooth representative of $f$ around $\ox$ relative to $\M$, we define the Lagrangian function $L:\X\times\R^m\to \R$ associated with $f$ by 
\begin{equation}\label{lagf}
L(u, y):= \widehat f (u) +\la y, \Phi(u)\ra, \quad  (u, y)\in \X\times \R^m,  
\end{equation}
where the mapping $\Phi$ comes from \eqref{mfold}. 

\begin{Theorem}[strict twice epi-differentiability]\label{thm:ps}
Let $\M$ be a $\C^2$-smooth manifold around $\ox$ with the local representation \eqref{mfold} and $f:\X\to \oR$ a $\C^2$-partly smooth function at $\ox$ relative to $\M$ such that $f$ is prox-regular and subdifferentially continuous at $\ox$ for some $\ov \in \ri \partial f(\ox)$. 
Let $\widehat f : \X\to \R$ be any $\C^2$-smooth representative of $f$ around $\ox$ relative to $\M$. 
Then, for all $(x, v)\in \gph \partial f$ sufficiently close to $(\ox, \ov)$,
the following properties hold:
\begin{enumerate}[noitemsep,topsep=2pt]
\item
the subgradient mapping $\partial f$ is strictly proto-differentiable at $x$ for $v$ and  its graphical derivative $D (\partial f)(x, v)$ can be calculated by
\begin{equation}\label{ps1}
D (\partial f)(x, v) (w)= \begin{cases}
\nabla_{xx}^2L(x, \mu)(w)+N_\M(x)\quad&\textrm{if }\; w \in T_\M(x),\\
\emptyset&\textrm{otherwise},
\end{cases}
\end{equation}
where $\mu\in \R^m$ is the unique vector satisfying $\nabla \Phi(x)^*\mu = v - \nabla \widehat f (x)$;
\item the function $f$ is strictly twice epi-differentiable at $x$ for $v$ and  its  second subderivative can be calculated by
\begin{equation}\label{ps3}
\d^2f(x, v)(w)=\nabla_{xx}^2L(x, \mu)(w, w) +\delta_{T_\M(x)}(w), \quad w\in \X.
\end{equation}
\end{enumerate}
\end{Theorem}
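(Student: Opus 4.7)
The proof plan rests on exploiting the local graph representation from Proposition~\ref{prop:pscons}(b) and parametrizing $\gph\partial f$ smoothly. Fix $(x,v)\in\gph\partial f$ sufficiently close to $(\ox,\ov)$. Proposition~\ref{prop:pscons}(b) then yields both $v\in\ri\partial f(x)$ and $v-\nabla\widehat f(x)\in N_\M(x)=\rge\nabla\Phi(x)^*$, while Proposition~\ref{local_prox} transfers prox-regularity and subdifferential continuity from $(\ox,\ov)$ to $(x,v)$. Since $\nabla\Phi(x)$ remains surjective on a neighborhood of $\ox$, the multiplier $\mu\in\R^m$ satisfying $\nabla\Phi(x)^*\mu=v-\nabla\widehat f(x)$ is unique.

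For part~(a), I would introduce the map $\Psi\colon(x',\mu')\mapsto\bigl(x',\,\nabla\widehat f(x')+\nabla\Phi(x')^*\mu'\bigr)$ on $(\M\cap\O)\times\R^m$. Using \eqref{ps2}, $\Psi$ takes values in $\gph\partial f$ and surjects onto a neighborhood of $(x,v)$ there; injectivity of $\nabla\Phi(x')^*$ makes $\Psi$ a $\C^1$-diffeomorphism from a neighborhood of $(x,\mu)$ in the $\C^2$-smooth manifold $(\M\cap\O)\times\R^m$ onto a neighborhood of $(x,v)$ in $\gph\partial f$. Consequently $\gph\partial f$ is a $\C^1$-smooth manifold near $(x,v)$, hence strictly smooth there, which by the discussion following \eqref{sproto} yields strict proto-differentiability of $\partial f$ at $x$ for $v$. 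Differentiating $\Psi$ at $(x,\mu)$ sends $(w,\eta)\in T_\M(x)\times\R^m$ to $(w,\,\nabla_{xx}^2L(x,\mu)w+\nabla\Phi(x)^*\eta)$; as $\eta$ varies in $\R^m$, the term $\nabla\Phi(x)^*\eta$ sweeps out $N_\M(x)$, giving exactly \eqref{ps1}.

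Part~(b) is obtained by invoking the equivalence between strict subgradient proto-differentiability and strict twice epi-differentiability for prox-regular, subdifferentially continuous functions, recorded before Definition~\ref{def:ps} via \cite[Corollary~6.2(b)]{pr} and \cite[Theorem~13.40]{rw}; these sources also supply the identity $D(\partial f)(x,v)=\partial\!\bigl(\tfrac12\d^2 f(x,v)\bigr)$ with $\d^2 f(x,v)$ a proper lsc positively homogeneous function of degree two. The candidate
$$q(w):=\nabla_{xx}^2L(x,\mu)(w,w)+\delta_{T_\M(x)}(w),\qquad w\in\X,$$
satisfies, for $w\in T_\M(x)$, $\partial(\tfrac12 q)(w)=\nabla_{xx}^2L(x,\mu)w+N_{T_\M(x)}(w)=\nabla_{xx}^2L(x,\mu)w+N_\M(x)$ (using that $T_\M(x)$ is a subspace, so $N_{T_\M(x)}(w)=T_\M(x)^\perp=N_\M(x)$), and $\partial(\tfrac12 q)(w)=\emptyset$ otherwise, matching \eqref{ps1} exactly. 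Uniqueness of the generalized-quadratic second subderivative then forces $\d^2 f(x,v)=q$, establishing \eqref{ps3}.

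The principal obstacle is ensuring that all ingredients—the local representation \eqref{ps2}, the relative interior condition, surjectivity of $\nabla\Phi$, and prox-regularity together with subdifferential continuity—can be maintained \emph{uniformly} on a common neighborhood of $(\ox,\ov)$, so that the argument for~(a) applies to every nearby $(x,v)\in\gph\partial f$ rather than only to $(\ox,\ov)$ itself. Once that uniformity is secured, the strict smoothness of $\gph\partial f$ follows cleanly from the $\C^1$-smoothness of $\Psi$, and the second-subderivative formula is pinned down by matching subdifferentials.
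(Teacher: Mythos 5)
Your proposal is correct in substance but takes a genuinely different route from the paper for part (a). The paper never parametrizes the graph: it first proves strict twice epi-differentiability of $\widehat f+\delta_\M$ at $\ox$ for $\ov$, by combining the second subderivative formula for $\delta_\M$ from \cite[Theorem~6.2]{mms}, strict twice epi-differentiability of $\delta_\M$ at $\ox$ for $\ov-\nabla\widehat f(\ox)\in N_\M(\ox)$ via \cite[Theorem~5.8]{HaS23}, and epi-convergence sum rules; it then passes to strict proto-differentiability of $\nabla\widehat f+N_\M$ through \cite[Corollary~4.3]{pr2} and \cite[Theorem~13.40]{rw}, transfers everything to $\partial f$ by the graph coincidence \eqref{ps2}, and finally repeats the argument at nearby pairs using Propositions~\ref{prop:cont}, \ref{local_prox} and \ref{prop:pscons}(b). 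Your route --- exhibiting $\gph\partial f$ near $(x,v)$ as the image of the injective $\C^1$ immersion $\Psi(x',\mu')=(x',\nabla\widehat f(x')+\nabla\Phi(x')^*\mu')$ --- is more elementary, treats all nearby pairs $(x,v)$ at once (since \eqref{ps2} holds on a fixed ball around $(\ox,\ov)$, you do not even need the relative interior condition at $(x,v)$ for this step), and makes \eqref{ps1} literally the tangent space of a $\C^1$ manifold; what it gives up is the intermediate epi-convergence information that the paper reuses in Corollary~\ref{sub_cpar}.

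Two steps need more care. First, ``$\gph\partial f$ is a $\C^1$ manifold near $(x,v)$, hence strictly smooth, hence $\partial f$ is strictly proto-differentiable'' relies on a fact that is true but not contained in the discussion after \eqref{sproto}: you must verify (or cite from \cite{r85}) that for an injective $\C^1$ immersion which is a homeomorphism onto a relative neighborhood of the graph, the strict graphical limit in \eqref{sproto} exists as a full limit and equals the range of the differential; the outer-limit half of that verification uses a local lower Lipschitz estimate for $\Psi$, which follows from injectivity of its differential (equivalently, from the uniform injectivity of $\nabla\Phi(x')^*$ near $\ox$). Second, and more importantly, in part (b) the phrase ``uniqueness of the generalized-quadratic second subderivative forces $\d^2f(x,v)=q$'' is not a theorem as stated: knowing $\partial\bigl(\tfrac12\d^2f(x,v)\bigr)=\partial\bigl(\tfrac12 q\bigr)$ does not by itself yield equality of two nonconvex functions. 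The paper closes exactly this gap by an integration argument: by \cite[Proposition~13.49]{rw} both $\d^2f(x,v)+r\|\cdot\|^2$ and $q+r\|\cdot\|^2$ are proper, lsc, and convex for $r$ large, so \cite[Theorem~12.25]{rw} gives equality up to an additive constant, which must be zero because both functions vanish at the origin. With these two repairs your argument is complete and yields the same conclusions \eqref{ps1} and \eqref{ps3}.
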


\begin{proof}
We first verify the claimed assertions in (a) and (b) for $(\ox, \ov)$ with $\omu$ being the unique vector satisfying $\nabla \Phi(\ox)^*\omu = \ov - \nabla \widehat f (\ox)$.
To this end, we begin by showing  that $\widehat f +\delta_\M$ is strictly twice epi-differentiable at $\ox$ for $\ov$. 
Take $(x,\eta)\in \gph N_\M$ with $x$ sufficiently close to $\ox$. 
According to \cite[Theorem~6.2]{mms}, $\delta_\M$ is   twice epi-differentible at $x $  for   $\eta$, 
and its second subderivative at $x $  for   $\eta$ is given by
\begin{equation}\label{co3.1}
\d^2\delta_\M(x, \eta)(w) = \la \nu, \nabla \Phi^2(\ox)(w, w)\ra +\delta_{K_\M(x,\, \eta)}(w), \quad w\in \X,
\end{equation}
where $\nu \in \R^m$ is the unique vector satisfying $\nabla \Phi(x)^*\nu = \eta$. 
Since $\widehat f $ is $\C^2$-smooth,  \cite[Exercise~13.18]{rw} tells us that $\widehat f +\delta_\M$ maintains twice epi-differentiability at such $x$ for $\nabla \widehat f (x)+\eta$ and 
\begin{equation}\label{ps5}
\d^2(\widehat f +\delta_\M)(x, \nabla \widehat f (x) + \eta)(w) = \nabla^2\widehat f (x)(w, w) +\d^2\delta_\M(x, \eta)(w), \quad w\in \X.
\end{equation}
Note further from Proposition~\ref{prop:pscons}(a) and the normal sharpness in Definition~\ref{def:ps} that $\ov - \nabla \widehat f (\ox) \in \para\{\partial f(\ox)\} = N_\M(\ox)$.
Since $N_\M(\ox)$ is obviously a linear subspace, we get  $\ov - \nabla \widehat f(\ox) \in \ri \sub \delta_\M(\ox)$ and conclude from \cite[Theorem~5.8]{HaS23} that $\delta_\M$ is strictly twice epi-differentiable at $\ox$ for $\ov - \nabla \widehat f (\ox)$.
By \cite[Proposition~5.6]{HaS23} and the aforementioned twice epi-differentiability of $\delta_\M$, we obtain for any  pair $(x, \eta)\to (\ox, \ov - \nabla \widehat f (\ox))$ with $\eta\in N_\M(x)$ that
\begin{equation*}
\d^2\delta_\M(x,\eta) \xrightarrow{e}\d^2\delta_\M(\ox, \ov - \nabla \widehat f (\ox)).
\end{equation*}
Employing now the sum rule for epi-convergence in \cite[Theorem~7.46(b)]{rw}, we deduce from the latter and \eqref{ps5} that
\begin{equation*}
\d^2(\widehat f +\delta_\M)(x, \nabla \widehat f (x) +\eta) \xrightarrow{e}\nabla^2\widehat f (\ox)(\cdot, \cdot) +\d^2\delta_\M(\ox, \ov - \nabla \widehat f (\ox))
\end{equation*}
as $(x, \eta)\to (\ox, \ov - \nabla \widehat f (\ox))$ with $\eta\in N_\M(x)$. 
We then conclude via \cite[Proposition~5.6]{HaS23} that $\widehat f +\delta_\M$ is strictly twice epi-differentiable at $\ox$ for $\ov$ and that 
\begin{equation*}
\d^2(\widehat f +\delta_\M)(\ox, \ov)(w) = \nabla^2\widehat f (\ox)(w, w) +\d^2
\delta_\M(\ox, \ov - \nabla \widehat f (\ox))(w), \quad w\in \X.
\end{equation*}
Recall that $\omu$ is the unique vector satisfying the equation $\nabla \Phi(\ox)^*\bar\mu = \ov - \nabla \widehat f (\ox)$.
In view of \eqref{criM} and \eqref{co3.1}, we get from the above formula that
\begin{align*}
\d^2(\widehat f +\delta_\M)(\ox, \ov)(w)
&= \nabla^2\widehat f (\ox)(w, w) +\la \omu, \nabla^2\Phi(\ox)(w,w)\ra +\delta_{T_\M(\ox)}(w)\\
&=\nabla^2_{xx} L(\ox, \omu)(w, w) +\delta_{T_\M(\ox)}(w),\quad w\in \X.
\end{align*}
By \cite[Example~10.24(f)]{rw},  $\widehat f +\delta_\M$ is strongly amenable at $\ox$ in the sense of  \cite[Definition~10.23(b)]{rw}. Thus, by \cite[Theorem~13.32]{rw},   it is  prox-regular and subdifferentially continuous at any $x\in \M$ sufficiently close to $\ox$.
Appealing now to \cite[Corollary~4.3]{pr2} and \cite[Theorem~13.40]{rw}, we deduce that the subgradient mapping $\partial (\widehat f +\delta_\M) = \nabla \widehat f  +N_\M$ is strictly proto-differentiable at $\ox$ for $\ov$ and that 
\begin{align*}
D (\nabla \widehat f  +N_\M)(\ox, \ov)(w) &= \partial \big(\tfrac{1}{2}\nabla^2_{xx} L(\ox, \omu)(\cdot, \cdot) +\delta_{T_\M(\ox)}\big)(w)\\
& = \nabla^2_{xx} L(\ox, \omu)(w) + N_{T_\M(\ox)}(w), \quad w\in \X.
\end{align*} 
According to Proposition~\ref{prop:pscons}(a), $\gph \sub f$ and $\gph \sub(\widehat f +\delta_\M) $ locally coincide around $(\ox,\ov)$. This
yields strict proto-differentiability of $\partial f$ at $\ox$ for $\ov$. 
It also confirms that 
$$
D(\sub f)(\ox,\ov)= D (\nabla \widehat f  +N_\M)(\ox, \ov)
$$
and hence proves the claimed formula  in \eqref{ps1}. 
Moreover, we infer via \cite[Corollary~4.3]{rw} that $f$ is strictly twice epi-differentiable  at $\ox$ for $\ov$.
Remembering from \cite[Theorem~13.40]{rw} that 
$$\partial \big(\tfrac{1}{2}\d^2f(\ox, \ov)\big) = D(\partial f)(\ox, \ov),$$
and setting $\ph(w):= \nabla_{xx}^2L(\ox, \omu)(w, w) +\delta_{T_\M(\ox)}(w)$ for any $w\in \X$, we deduce from \eqref{ps1} that 
$\partial \big(\d^2f(\ox, \ov)\big)(w)=\sub \ph(w)$
for any $w\in \X$. 
By \cite[Proposition~13.49]{rw}, we find $r>0$ such that the mapping $w\mapsto \d^2f(\ox, \ov)(w)+ r\|w\|^2$ is a proper lsc convex function.
Choosing a bigger $r$ if necessary, we can assume without loss of generality that $w\mapsto \ph(w)+ r\|w\|^2$ is also a proper lsc convex function. 
Appealing to \cite[Theorem~12.25]{rw}, which is an integration of the subdifferential for convex functions, tells us that there is a constant $c$ for which we have $\d^2f(\ox, \ov)(w)+ r\|w\|^2=\ph(w)+ r\|w\|^2 +c$ for any $w\in \X$. 
Since $\d^2f(\ox, \ov)(0)=0$, we get $c=0$, 
which proves the formula \eqref{ps3} for $(\ox, \ov)$.

We now argue that assertions (a) and (b)  are also valid for any $(x, v)\in \gph \partial f$ sufficiently close to  $(\ox, \ov)$.
It follows from the discussion after Definition~\ref{def:ps} and Proposition~\ref{prop:cont} that $f$ is $\C^2$-partly smooth at $x$ for $v\in \partial f(x)$ for all $(x, v)$ sufficiently close to $(\ox, \ov)$.
Also, Proposition~\ref{local_prox} tells us that  prox-regularity and subdifferential continuity of $f$ at $x$ for $v$ are preserved whenever $(x,v)\in \gph\partial f$ and sufficiently close to $(\ox, \ov)$.
For any such a pair $(x, v)$, we can deduce, without loss of generality, via Proposition~\ref{prop:pscons}(b) that $v\in \ri\partial f(x)$.
Using an argument for any such a pair $(x,v)$ similar to that for $(\ox, \ov)$ proves (a) and (b) for $(x,v)$ and hence completes the proof.
\end{proof}

Strict twice epi-differentiability in a neighborhood relative to the graph of subgradient mapping was first characterized for polyhedral functions in \cite[Theorem~4.1]{HJS22} via the same relative interior condition in Theorem~\ref{thm:ps}.
A similar result was then achieved for composite functions whose outer functions are polyhedral convex in \cite[Theorem~3.9]{HaS22} under a second-order qualification condition (SOQC).
We should stress that these structures, namely, polyhedral function and $\C^2$-smooth mapping composed with polyhedral function satisfying SOQC, are stable in the sense that if a given function has such a structure at a given point, it maintains the same structure at all points nearby.
In \cite[Theorem~5.8]{HaS23}, by assuming the reliable $\C^2$-decomposability of the function in a neighborhood of the point in question, the characterization was extended to the class of reliably $\C^2$-decomposable functions considered in Example~\ref{ex:decomp}.
However, Example~\ref{ex:decomp} and Theorem~\ref{thm:ps} together tell us that reliable $\C^2$-decomposability at the point in question solely can guarantee the aforementioned characterization.
This discloses the power of robustness of $\C^2$-partial smoothness.
We should remark, however, that in Theorem~\ref{thm:ps} the relative interior condition serves as a sufficient condition, not as a characterization, for strict twice epi-diffrentiability.
In fact, the essence of $\C^2$-partial smoothness is rooted in the local representation of $\gph \sub f$ in \eqref{ps2} which is available only for multipliers taken from the relative interior of the subdifferential, to the best of our knowledge.

\begin{Remark}
Note that it is possible to get another proof of Theorem~\ref{thm:ps} using \cite[Theorem~4.4]{pr2} and \cite[Theorem~28]{DHM06}. Indeed, it was shown in the former that for prox-regular functions, strict twice epi-differentiability amounts to continuous differentiability of proximal mappings under an extra condition that the function has a global minimum. The latter restrictive condition was weakened  recently by the authors in \cite[Theorem~4.7]{HaS23}, where it was argued that prox-boundedness suffices to achieve this equivalence. On the other hand, it was shown in \cite[Theorem~28]{DHM06}  for prox-regular and prox-bounded functions that are $\C^2$-partly smooth that the proximal mapping is continuously differentiable. Combining these results tells us that, providing the same relative interior condition imposed in Theorem~\ref{thm:ps} is satisfied, any prox-regular, prox-bounded, and $\C^2$-partly smooth function is strictly twice epi-differentiable. Theorem~\ref{thm:ps} provides new information, however. First, it shows that prox-boundedness is not required. Second, it proves strict twice epi-differentiability of a prox-regular and $\C^2$-partly smooth function in a neighborhood of a given point. Lastly, it presents a simple formula for the second subderivative of such functions that has been reported before and is a byproduct of our direct proof for this result.  
\end{Remark}

\begin{Remark}
Assume that $f: \X\to \oR$ is reliably $\C^2$-decomposable at $\ox \in \X$ with representation \eqref{comp} and that $\ov \in \ri\partial f(\ox)$.
Pick an arbitrary $y \in \partial \vt (0)$.
Example~\ref{ex:decomp} asserts that $f$ is $\C^2$-partly smooth at $\ox$ relative to the $\C^2$-smooth manifold $\M = \big\{ x\in \X\, \big|\, \Phi(x)\in S^\perp\big\}$, where $S = \para\{\sub \vt (0)\}$, and $\widehat f (x) = \la y, \Phi(x)\ra$ is a $\C^2$-smooth representative of $f$ around $\ox$ relative to $\M$.
It can be shown that the composite function \eqref{comp} satisfying \eqref{nondeg} is prox-regular and subdifferentially continuous at $\ox$ for $\ov$, since it is strongly amenable at $\ox$ in the sense of  \cite[Definition~10.23(b)]{rw}.
Let $\omu\in \Y$ be any solution to $\nabla(P_S\circ\Phi)(\ox)^*\mu = \ov - \nabla\Phi(\ox)^*y$.
Recalling that the projection $P_S$ is linear and self-adjoint, we get from the latter that
$$
\nabla \Phi(\ox)^*P_S(\bar\mu)=\ov -\nabla\widehat f(\ox).
$$
It then follows from  \eqref{nondeg} that $\eta := P_S(\omu)$ is the unique vector in $S$ satisfying $\nabla\Phi(\ox)^*\eta = \ov - \nabla\widehat f(\ox)$.
We now can conclude via Theorem~\ref{thm:ps}  that $f$ is strictly twice epi-differentiable at $\ox$ for $\ov$ and that 
\begin{equation}\label{d2comp}
\d^2f(\ox, \ov)(w) = 
\nabla_{xx}^2\la y + P_S(\omu), \Phi\ra(\ox)(w, w) +\delta_{T_\M(\ox)}(w), \quad w\in \X.
\end{equation}
We next justify that $\oy := y + P_S(\omu) \in \partial \vt(0)$ and $\nabla\Phi(\ox)^*\oy = \ov$.
Indeed, the latter follows from definition of $\omu$.
To prove the former, let $\hat y$ be the unique vector from $\partial \vt (0)$ satisfying $\nabla\Phi(\ox)^*\hat y = \ov$, thanks to \eqref{nondeg}.
Since $\ov \in\ri\partial f(\ox)$, we infer via \cite[Proposition~5.3(b)]{HaS23} that $\hat y \in \ri \partial \vt (0)$.
For any $t\in [0, 1)$, we have $(1-t)\hat y + t y \in \ri \partial \vt (0)$ which results in 
$$
(1-t)\hat y +t\oy = (1-t)\hat y + t y + t P_S(\omu) \in \partial \vt (0)
$$
for all $t\geq0$ sufficiently small due to $S = \para\{\partial \vt (0)\}$. 
Because  $\nabla \Phi(\ox)^* ((1-t)\hat y + t\oy) = \ov$, 
we then conclude from the uniqueness of $\hat y$ that $\oy = \hat y$.
Recalling the composite form \eqref{comp} of $f$, we have $K_f(\ox, \ov) = \big\{w\in \X\, \big|\, \nabla \Phi(\ox) w \in K_\vt(0, \oy)\big\}$.
Using sublinearity of $\vt$, we have $\vt(u) =\d\vt(0)(u)$ for any $u\in \dom \vt$, which coupled with \cite[Theorem~8.24 and Corollary~8.25]{rw} leads us to 
\begin{equation*}
K_\vt(0, \oy) = \big\{u\in \Y\, \big|\, \vt(u)  =  \la \oy, u\ra\big\} = \big\{u\in \Y\, \big|\, \oy \in \argmax_{y\in \partial \vt (0)} \la u, y\ra \big\}.
\end{equation*}
Since $\oy\in \ri \partial \vt (0)$, the linear function $\la u, \cdot\ra$ attains its maximum over $\partial \vt (0)$ at $\oy$ if and only if $u \in (\aff\{\partial \vt (0)\})^\bot = S^\bot$.
Thus, we have
\begin{equation*}
K_f(\ox, \ov) = \big\{w\in \X\, \big|\, \nabla \Phi(\ox) w \in S^\bot\big\} = T_\M(\ox),
\end{equation*}
where the last equality follows from \eqref{nondeg}.
Combining this, \eqref{d2comp}, and the fact that $\oy = y + P_S(\omu)$ is the unique vector from $\partial \vt (0)$ satisfying $\nabla\Phi(\ox)^*\oy = \ov$ implies that  Theorem~\ref{thm:ps} covers the implication (c) $\Longrightarrow$ (a), and also the implication (c) $\Longrightarrow$ (b), in \cite[Theorem~5.8]{HaS23}.
We should stress here that in order to apply Theorem~\ref{thm:ps}, one only needs to justify the reliable $\C^2$-decomposability of the function $f$ at the point $\ox$. On the contrast,  \cite[Theorem~5.8]{HaS23} demands the latter property in a neighborhood of $\ox$.
Thus, Theorem~\ref{thm:ps} actually improves those implications.
Particularly, we infer from the latter theorem that a proper, lsc, and sublinear function $\vt: \Y\to \oR$ is strictly twice epi-differentiable at $z\in \Y$ for $y\in \sub \vt(z)$ for all $(z, y)\in \gph \sub \vt$ close to $(0, \oy)$, for any $\oy \in \ri \sub \vt(0)$, which is an improvement of  \cite[Theorem~5.7]{HaS23}. 
\end{Remark}


Stability of the strict twice-epidifferentiability of a $\C^2$-partly smooth function, established in Theorem~\ref{thm:ps}, makes the class of such functions a proper subset of the class of strictly twice epi-differentiable functions as demonstrated in the next  example.

\begin{Example}[failure of strict twice epi-diffrentiability in a neighborhood]
To demonstrate that  the class of strictly twice epi-differentiable functions is strictly larger than that of $\C^2$-partly smooth functions, 
consider  the function $f(x) = \int_0^xg(t)dt, \, x\in \R$, where $g: \R\to\R$ is  a continuous piecewise affine function given by
\begin{equation*}
g(x) = \begin{cases}
0&\mbox{ if }\; x \leq 0,\\
2^{2i} + 3\cdot2^i(x-2^i)\quad&\mbox{ if }\; x \in  [2^i, 2^{i+1}), \; i\in \mathbb{Z}.
\end{cases}
\end{equation*}
Observe that $f$ is $\C^1$-smooth with $\nabla f (x) = g(x)$ for all $x\in \R$. 
Moreover, it can be seen that $g$ is monotonically increasing over $\R$, which ensures that $f$ is convex.
Note also that the affine pieces comprising the graph of  $g$ have increasing slopes of $0$ and $3\cdot2^i,\, i \in \mathbb{Z}$.
Assume that $t$ and $u$ are close to $0$ and that $t>u$ with $t\in [2^{-i}, 2^{-i+1})$ for some $i\in \N$. If $u\leq 0< t$, we arrive at
$$g(t)-g(u) = g(t) = 3\cdot 2^{-i}t - 2^{-2i+1}\leq 3\cdot 2^{-i}t\leq 3\cdot 2^{-i}(t-u).$$
If $t> u\ge 0$, then $u \in [2^{-j}, 2^{-j+1})$ for some $j\geq i$. Assume that $j>i$ and consider the lines
$$y=g_i(x) := 3\cdot 2^{-i}x-2^{-2i+1} \quad\textrm{ and }\quad y=g_j(x) := 3\cdot 2^{-j}x-2^{-2j+1},$$
which intersect each other at the point whose $x$-coordinate is $x_0=\frac{2}{3}2^{-i}+\frac{1}{3}2^{-j+1}$.
Since $u\leq x_0\leq t$, we have $g(u)=g_j(u)\geq g_i(u)$ and
\begin{align*}
   g(t)-g(u) \leq g_i(t) - g_i(u)= 3\cdot 2^{-i}(t-u). 
\end{align*}
Clearly, the latter estimate is also valid for $j=i$. 
Combining these tells us that 
\begin{equation*}
0\leq \frac{g(t) - g(u)}{t-u} \leq\frac{3\cdot2^{-i}(t-u)}{t-u}\leq 3t\to 0\quad \mbox{ as }\; t, u \searrow 0.
\end{equation*}
Finally, we have $g(t) - g(u) = 0$  if  $u<t\le 0$.
Thus, $g$ is strictly differentiable at $\ox = 0$ with $\nabla g(0) = 0$.
Note, however, that $g$ is not differentiable at any $x= 2^i, i\in \mathbb{Z},$ since those points correspond to kinks on $\gph g$.
In view of \cite[Proposition~3.1(b)]{r85} and \cite[Theorem~4.3]{pr2}, we conclude that the convex function $f$ is strictly twice epi-differentiable at $\ox=0$ for $\nabla f(\ox) = 0$ while  not satisfying such a property along the sequence $x^k:= 2^{-k},\, k\in \N,$ which converges to $0$.
Theorem~\ref{thm:ps} then tells us that $f$ is not  $\C^2$-partly smooth at $0$.
\end{Example}


As Theorem~\ref{thm:ps} shows, under the assumptions therein a $\C^2$-partly smooth function is always strictly twice epi-differentiable around the point under consideration.
One may wonder whether strict twice epi-differentiability in a neighborhood actually amounts to $\C^2$-partial smoothness.
The example below rules out this possibility.   

\begin{Example}[failure of $\C^2$-partial smoothness]
Define the function $f: \R^2\to \R$  by $f(x_1, x_2) = |x_1^3x_2^2|$, which is taken from \cite[Example~4.11]{pr3}. 
Since $f$ can be expressed equivalently as $\max\{x_1^3x_2^2, -x_1^3x_2^2\}$, it is prox-regular and subdifferentially continuous on $\R^2$.
Moreover, $f$ is $\C^1$-smooth on $\R^2$   and $\C^2$-smooth around any point $(x_1, x_2)$ with $ x_1\neq 0$.
Its gradient and Hessian, when exists, can be calculated as 
\begin{equation*}
\nabla f(x_1, x_2) = 
    \sign(x_1) ( 3x_1^2x_2^2,2x_1^3x_2 ) 
\end{equation*}
and
\begin{equation*}
    \nabla^2f(x_1, x_2) = \sign(x_1)\begin{bmatrix}
     6x_1x_2^2 & 6x_1^2x_2\\
     6x_1^2x_2 & 2x_1^3
\end{bmatrix} \quad\textrm{ if }\; x_1\neq 0,
\end{equation*}
where $\sign(x_1)$ signifies the sign of $x_1$: $\sign(x_1) = 1$ if $x_1>0$, $\sign(x_1)=-1$ if $x_1<0$, and $\sign(x_1)=0$ otherwise.
For any $w=(w_1,w_2)\in \R^2$, it is not hard to see that 
\begin{equation*}
\d^2f(x_1, x_2)(w) = \begin{cases}
0&\textrm{if }\; (x_1, x_2) \in \{0\}\times \R,\\
\nabla^2f(x_1, x_2)(w,w)\quad&\textrm{otherwise}.
\end{cases}
\end{equation*}
Observe from these formulas that $\d^2f(x_1', x_2')\to\d^2f(x_1, x_2)$ uniformly on bounded sets whenever $(x_1', x_2')\to (x_1, x_2)$.
It then follows from \cite[Proposition~7.15]{rw} that the latter uniform convergence implies $\d^2f(x_1', x_2')\xrightarrow{e}\d^2f(x_1, x_2)$ when $(x_1', x_2')\to (x_1, x_2)$, or, equivalently, $f$ is strictly twice epi-differentiable at  any $(x_1, x_2)\in\R^2$.
However, we see that $f$ is not $\C^2$-partly smooth at $\ox:=(0,0)$. 
Indeed,  if that was the case, we would conclude from the smoothness of $f$ at $\ox$ and the normal sharpness property of $f$ at this point that   $N_\M(\ox)=\{0\}$, where $\M$ is the associated manifold to $f$ from Definition~\ref{def:ps}. 
By \cite[Exercise~6.19]{rw}, we arrive at $\ox\in \inte \M$, which tells us that $f$ must be $\C^2$-smooth around $\ox$, a contradiction to the calculation above. 
This confirms that $f$ is not $\C^2$-partly smooth at $\ox$.
\end{Example}

Recall that for a set-valued mapping $F:\X\tto\Y$ with $(\ox, \oy) \in \gph F$, the coderivative mapping of $F$ at $\ox$ for $\oy$, denoted $D^*F(\ox, \ov)$, is defined via the normal cone to $\gph F$ at $(\ox, \oy)$ by
\begin{equation}\label{cod}
\eta\in D^*F(\ox, \oy)(w) \iff (\eta, -w)\in N_{\gph F}(\ox, \oy).
\end{equation}

\begin{Corollary}\label{cod_sgraph}
Let $f:\X\to \oR$ be a $\C^2$-partly smooth function at $\ox$ relative to a $\C^2$-smooth manifold $\M$ such that $f$ is prox-regular and subdifferentially continuous at $\ox$ for  $\ov$ with  $\ov\in \ri \partial f(\ox)$. Then, for any $(x,v)\in \gph \sub f$ sufficiently close to $(\ox, \ov)$, we have
\begin{equation}\label{cod1}
D(\sub f)(x, v) = \widetilde D(\sub f)(x, v)= D^*(\sub f)(x, v).
\end{equation}
\end{Corollary}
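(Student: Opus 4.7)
The plan is to bootstrap Theorem~\ref{thm:ps} with a short linear-algebra computation that exploits the orthogonal splitting $\X = T_\M(x) \oplus N_\M(x)$ and the self-adjointness of $\nabla_{xx}^2 L(x,\mu)$. The first equality $D(\partial f)(x,v) = \widetilde D(\partial f)(x,v)$ is immediate from Theorem~\ref{thm:ps}(a): that theorem asserts strict proto-differentiability of $\partial f$ at every $(x,v)\in \gph\partial f$ sufficiently close to $(\ox,\ov)$, and by the implication stated right after \eqref{sproto} in Section~\ref{sect:Pre}, strict proto-differentiability forces the graphical derivative and the strict graphical derivative to coincide.

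For the remaining equality $D^*(\partial f)(x,v)=D(\partial f)(x,v)$, I would exploit the fact that strict proto-differentiability of $\partial f$ at $(x,v)$ amounts to $\gph \partial f$ being \emph{strictly smooth} at $(x,v)$ in the sense of \cite[page~173]{r85}. Since a strictly smooth set is locally a $\C^1$-manifold, its tangent cone $T_{\gph \partial f}(x,v)$ is a linear subspace and the limiting normal cone is its orthogonal complement; combined with \eqref{proto} this reads
$$
N_{\gph \partial f}(x,v) \;=\; \bigl(\gph D(\partial f)(x,v)\bigr)^{\perp}.
$$

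I would then plug in the explicit formula \eqref{ps1}. Writing $T := T_\M(x)$, $N := N_\M(x) = T^{\perp}$, and $A := \nabla_{xx}^2 L(x,\mu)$, one has
$$
\gph D(\partial f)(x,v) = \bigl\{(w,\, Aw + n) \,\big|\, w\in T,\ n\in N\bigr\}.
$$
Using $T\oplus N=\X$ together with the self-adjointness of $A$, a direct computation (setting $u=0$ to pin down the second slot of $(a,b)\in L^{\perp}$ in $T$, then setting $n=0$ and using $\langle b,Au\rangle=\langle Ab,u\rangle$) shows that the orthogonal complement above equals $\{(-Ab + n',\, b)\,|\, b\in T,\ n'\in N\}$. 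Hence $(\eta,-w)\in N_{\gph \partial f}(x,v)$ if and only if $w\in T$ and $\eta - Aw\in N$, which is exactly $(w,\eta)\in \gph D(\partial f)(x,v)$. Translating through \eqref{cod} yields $\eta\in D^*(\partial f)(x,v)(w)\iff \eta\in D(\partial f)(x,v)(w)$, completing the proof.

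The only genuinely non-trivial step is the passage from strict smoothness of $\gph \partial f$ at $(x,v)$ to the identity $N_{\gph \partial f}(x,v) = T_{\gph \partial f}(x,v)^{\perp}$; this is classical and stems from the fact that strictly smooth sets are locally $\C^1$-manifolds. Everything else reduces to the one-line orthogonal-complement computation above.
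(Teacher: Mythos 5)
Your first equality is obtained exactly as in the paper: Theorem~\ref{thm:ps}(a) gives strict proto-differentiability of $\partial f$ at all $(x,v)\in\gph\partial f$ near $(\ox,\ov)$, and the remark after \eqref{sproto} yields $D(\partial f)(x,v)=\widetilde D(\partial f)(x,v)$. For the second equality the paper simply invokes \cite[Theorem~3.9]{HaS23}, whereas you attempt a self-contained computation from \eqref{ps1}; the linear algebra itself is correct (with $T=T_\M(x)$, $N=T^\perp$ and $A=\nabla^2_{xx}L(x,\mu)$ self-adjoint, the orthogonal complement of $\{(w,Aw+n):w\in T,\,n\in N\}$ is indeed $\{(-Ab+n',b):b\in T,\,n'\in N\}$, and translating through \eqref{cod} gives $D^*=D$).

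The flaw is in the step you yourself single out as the only non-trivial one: strict smoothness of a set at a \emph{single} point does \emph{not} imply that the set is locally a $\C^1$-manifold, so you cannot conclude $N_{\gph\partial f}(x,v)=T_{\gph\partial f}(x,v)^\perp$ that way. Strict smoothness is a pointwise condition; the paper's own example built from the piecewise affine function $g$ with kinks at $2^{-k}$ produces a gradient graph that is strictly smooth at the origin yet has corners accumulating there, hence is not a $\C^1$-manifold on any neighborhood. The identity you need is nevertheless true and can be justified in two ways. (i) Pointwise: the outer limit in \eqref{sproto} is symmetric under $w\mapsto -w$, while the corresponding inner limit is the regular (Clarke) tangent cone of the locally closed set $\gph\partial f$, which is convex; when the two coincide, $T_{\gph\partial f}(x,v)$ is a linear subspace, the regular normal cone is its orthogonal complement, and the limiting normal cone is squeezed between the regular normal cone and the polar of the regular tangent cone (cf.\ \cite[Theorem~6.28]{rw}), forcing $N_{\gph\partial f}(x,v)=T_{\gph\partial f}(x,v)^\perp$. (ii) More cheaply in the present setting: by Proposition~\ref{prop:pscons}(b), near $(\ox,\ov)$ the set $\gph\partial f$ coincides with $\gph(\nabla\widehat f+N_\M)$, which genuinely is a $\C^1$-manifold (the normal bundle of the $\C^2$-manifold $\M$ shifted by the $\C^1$ map $\nabla\widehat f$), so tangents and limiting normals are orthogonal complements there. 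With either repair your argument goes through and gives an explicit alternative to the paper's citation of \cite[Theorem~3.9]{HaS23}; as written, however, the justification of the key identity is not valid.
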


\begin{proof} According to Theorem~\ref{thm:ps}, $f$ is strictly proto-differentiable at $x$ for $v$ for any $(x,v)\in  \gph \sub f$ sufficiently close to $(\ox,\ov)$, which in turn justifies the first claimed equality. The second one then results from the recent observation in \cite[Theorem~3.9]{HaS23}.
\end{proof}

We should add here that  the local representation of $\gph \sub f$ in \eqref{ps2} for a $\C^2$-partly smooth function $f$  was exploited in \cite[Theorem~5.3 and Corollary~5.4]{LeZ13} to derive an exact formula for 
$D^*(\sub f)(\ox, \ov)$ in terms of the covariant Hessian $\nabla^2_\M f_\ov(\ox)$ of the tilted function $f_\ov:= f - \la \ov, \cdot\ra$.
Note that the bilinear form $\nabla^2_{xx}L(\ox, \olm)$ in \eqref{ps1} restricted to $T_\M(\ox) \times T_\M(\ox)$ gives $\nabla^2_\M f_\ov(\ox)$; see \cite[Definition~2.11]{LeZ13} and the discussion afterwards.
Corollary~\ref{cod_sgraph} not only provides a new proof for this result but also goes one step further and calculates the strict graphical derivative of the subgradient mapping for this class of functions.

Recall that a function $f:\X\to \oR$ enjoys the quadratic growth condition at 
$\ox$ if $f(\ox)$ is finite and there exists $\ell>0$ such that 
\begin{equation}\label{qgc}
f(x)\ge f(\ox)+\frac{\ell}{2}\|x-\ox\|^2
\end{equation}
for all $x$ close to $\ox$. 
In the corollary below, we glean a characterization for the quadratic growth of $\C^2$-partly smooth functions from our calculation of the second subderivative of those functions in the proof of Theorem~\ref{thm:ps}. 
The same result was previously observed in \cite[Propostion~4.13]{LeZ13} using a different approach.

\begin{Corollary}\label{sub_cpar} Under the hypothesis of Theorem~{\rm\ref{thm:ps}}, there exists $\ve>0$ such that for any $(x,v)\in \B_\ve(\ox,\ov)\cap \gph \sub f$ we have 
\begin{equation}\label{qg1}
\d^2f(x, v) = \d^2(\widehat f +\delta_\M)(x, v).
\end{equation}
Consequently, if $\ov=0$ then $f$ enjoys the quadratic growth condition at 
$\ox$ if and only if $\hat f$ enjoys the same property at $\ox$ with respect to  $\M$, namely \eqref{qgc} holds  with $f$ replaced with $\hat f$ for all $x\in \M$ close to $\ox$. 
\end{Corollary}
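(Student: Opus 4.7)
The plan is to extract the equality \eqref{qg1} as a by-product of the proof of Theorem~\ref{thm:ps} and then couple it with the standard characterization of quadratic growth for prox-regular functions via positive definiteness of the second subderivative.

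For the first claim, I would compare the formula in \eqref{ps3} for $\d^2 f$ with a parallel formula for $\d^2(\widehat f + \delta_\M)$ that is already implicit in the proof of Theorem~\ref{thm:ps}. The chain formed by \eqref{co3.1}, \eqref{ps5}, and the identity $K_\M(x, \eta) = T_\M(x)$ from \eqref{criM} yields, for every $(x, \eta) \in \gph N_\M$ sufficiently close to $(\ox, \ov - \nabla \widehat f(\ox))$,
\[
\d^2(\widehat f + \delta_\M)\bigl(x,\, \nabla \widehat f(x) + \eta\bigr)(w) = \nabla^2_{xx} L(x, \nu)(w, w) + \delta_{T_\M(x)}(w), \quad w \in \X,
\]
where $\nu$ is uniquely determined by $\nabla \Phi(x)^* \nu = \eta$. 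Proposition~\ref{prop:pscons}(b) then guarantees, after possibly shrinking the radius $\ve$, that every $(x, v) \in \B_\ve(\ox, \ov) \cap \gph \sub f$ can be written as $v = \nabla \widehat f(x) + \eta$ with $\eta \in N_\M(x)$; the multipliers $\mu$ of \eqref{ps1} and $\nu$ above then coincide, and comparing with \eqref{ps3} delivers \eqref{qg1}.

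For the second claim, set $\ov = 0$. Both $f$ and $g := \widehat f + \delta_\M$ are prox-regular and subdifferentially continuous at $\ox$ for $0$---the former by hypothesis, the latter by strong amenability as noted in the proof of Theorem~\ref{thm:ps}---and each is twice epi-differentiable at $\ox$ for $0$ by Theorem~\ref{thm:ps}. Moreover, the local coincidence of $\gph \sub f$ and $\gph \sub g$ from Proposition~\ref{prop:pscons}(b) forces $0 \in \sub f(\ox) \iff 0 \in \sub g(\ox)$. For such functions, quadratic growth at $\ox$ for $0$ is equivalent to positive definiteness of the second subderivative, namely $\d^2 h(\ox, 0)(w) > 0$ for all $w \neq 0$ (see, e.g., \cite[Theorem~13.24]{rw}). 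Applying \eqref{qg1} at $(\ox, 0)$ identifies $\d^2 f(\ox, 0)$ with $\d^2 g(\ox, 0)$, so $f$ enjoys quadratic growth at $\ox$ if and only if $g$ does; and since $\delta_\M \equiv +\infty$ off $\M$, quadratic growth of $g$ at $\ox$ reduces precisely to \eqref{qgc} with $f$ replaced by $\widehat f$ restricted to $x \in \M$ near $\ox$.

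The only delicate point is pinning down the precise form of the characterization of quadratic growth via positive definiteness of $\d^2$ for prox-regular, twice epi-differentiable functions. Were one to prefer a self-contained route, the lower quadratic bound built into prox-regularity together with passing a putative quadratic bound through the liminf defining $\d^2$ supplies the equivalence without external citation.
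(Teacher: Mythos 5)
Your proposal is correct and follows essentially the same route as the paper: the identity \eqref{qg1} is read off as a by-product of the proof of Theorem~\ref{thm:ps} (comparing \eqref{ps3} with the formula for $\d^2(\widehat f+\delta_\M)$ obtained from \eqref{co3.1}, \eqref{ps5}, \eqref{criM}, and the local coincidence of the subgradient graphs in \eqref{ps2}), and the quadratic-growth equivalence then follows from applying \eqref{qg1} at $(\ox,0)$ together with the characterization of quadratic growth by positive definiteness of the second subderivative in \cite[Theorem~13.24(c)]{rw}. The extra remarks on prox-regularity and twice epi-differentiability of $\widehat f+\delta_\M$ are harmless but not needed for that citation.
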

\begin{proof} The first claim about the second subderivative of $f$ was indeed proved in the proof of Theorem~\ref{thm:ps}. The second claim results from \eqref{qg1} for $(x, v)=(\ox, 0)$ and the fact that the validity of quadratic growth condition can be fully characterized by positiveness of the second subderivative; see \cite[Theorem~13.24(c)]{rw}. 
\end{proof}

\begin{Remark}
It is worth pointing out that in \cite[Propostion~4.13]{LeZ13} the subdifferential continuity was not assumed. Taking into account the discussion in Remark~\ref{sub_cont}, it is possible to drop the latter condition from Corollary~\ref{sub_cpar} as well. 
\end{Remark}

The rest of this section is devoted to some applications of the established strict twice epi-differentiability of $\C^2$-partly smooth functions. 
We begin with regularity properties of the solution mapping to the generalized equation
\begin{equation}\label{cGE}
\op\in G(x):=\opsi (x) +\sub f(x),
\end{equation}
where $f: \X\to \oR$ is a  $\C^2$-partly smooth function, $\opsi: \X\to \X$ is $\C^1$-smooth around the point under consideration, and $\op\in\X$.
We are mainly interested in the solution mapping to \eqref{cGE}, that is the 
mapping $p\mapsto G^{-1}(p)$. 

Recall that a set-valued mapping $F:\X \tto \Y$ is said to be {metrically regular} at $\ox$ for $\oy\in F(\ox)$ if there exist a positive constant $\kappa$  and  neighborhoods $U$ of $\ox$ and $V$ of $\oy$ such that the estimate
$\dist \big(x, F^{-1}(y)\big)\leq \kappa\, \dist\big(y, F(x)\big)$
holds for all $(x, y)\in U\times V$.
The mapping $F$ is said to be {strongly metrically regular} at $\ox$ for $\oy$ if its inverse $F^{-1}$ admits a Lipschitz continuous single-valued localization around $\oy$ for $\ox$, which means that there exist neighborhoods $U$ of $\ox$ and $V$ of $\oy$ such that the mapping $y\mapsto F^{-1}(y)\cap U$ is single-valued and Lipschitz continuous on $V$. 
According to \cite[Proposition~3G.1]{DoR14}, strong metric regularity of $F$ at $\ox$ for $\oy$ amounts to $F$ being metrically regular at $\ox$ for $\oy$ and the inverse mapping $F^{-1}$ admitting a single-valued localization around $\oy$ for $\ox$.
These regularity properties
of a set-valued mapping enjoy full characterizations via the coderivative in \eqref{cod} (cf. \cite[Theorem~9.43]{rw}), and the strict graphical derivative in \eqref{sproto} (cf. \cite[Theorem~9.54(b)]{rw}),
respectively.
We recently  demonstrated in \cite[Proposition~4.1]{HaS23} that these regularity properties are equivalent for the mapping $G$ in \eqref{cGE} provided that $\sub f$ is strictly proto-differentiable. Moreover,  we presented several simple characterizations for those equivalent properties of $G$. 
In the presence of strict proto-differentiability of $\sub f$ in a neighborhood relative to $\gph \sub f$, established for $\C^2$-partly smooth functions in Theorem~\ref{thm:ps}, we provide below an adaption of the aforementioned results for the mapping $G$ from \eqref{cGE}.

\begin{Proposition}\label{prop:GE}
Let $\ox$ be a solution to the generalized equation in \eqref{cGE} for $\op\in \X$. Assume that $\opsi$ is $\C^1$-smooth around $\ox$ and $f$ is $\C^2$-partly smooth at $\ox$ relative to a $\C^2$-smooth manifold $\M$ with local representation \eqref{mfold}.
Assume in addition that $\ov:=\op-\opsi(\ox) \in \ri \partial f(\ox)$, and that $f$ is  prox-regular and subdifferentially continuous at $\ox$ for $\ov$.
Then the following properties are equivalent.
\begin{itemize}[noitemsep,topsep=2pt]
\item [\rm{(a)}] The mapping $G$ from \eqref{cGE} is metrically regular at $\ox$ for $\op$.

\item [\rm{(b)}] The mapping $G$ from \eqref{cGE} is strongly metrically regular at $\ox$ for $\op$.

\item [\rm{(c)}] The inverse mapping $G^{-1}$ has a Lipschitz continuous single-valued localization $s$ around $\op$ for $\ox$.

\item [\rm{(d)}] The condition 
\begin{equation}\label{ge1}
\big\{ w\in T_\M(\ox)\, \big|\, \big(\nabla\opsi(\ox)+\nabla^2_{xx}L(\ox, \omu)\big)^*(w) \in N_\M(\ox)\big\} =\{0\}
\end{equation}
is satisfied, where $L$ is taken from \eqref{lagf} and $\omu$ is the unique solution to $\nabla\Phi(\ox)^*\omu = \op- (\opsi(\ox)+\nabla \widehat f  (x))$
with $\widehat f : \X\to \R$ being any $\C^2$-smooth representative of $f$ around $\ox$ relative to $\M$.

\item [\rm{(e)}] The composite mapping $P_{T_\M(\ox)}\circ(\nabla\opsi(\ox)+\nabla^2_{xx}L(\ox, \omu))\vert_{T_\M(\ox)}$, where 
$(\nabla\opsi(\ox)+\nabla^2_{xx}L(\ox, \omu))\vert_{T_\M(\ox)}$ stands for the restriction of the linear mapping $\nabla\opsi(\ox)+\nabla^2_{xx}L(\ox, \omu)$ to $T_\M(\ox)$,  is an one-to-one linear mapping from $T_\M(\ox)$ onto $T_\M(\ox)$, where $L$ and $\omu$ are taken from {\rm (d)}.
\end{itemize} 

Moreover, if any of the above equivalent properties holds, then the mapping $s$ in {\rm (c)} is $\M$-valued, namely $s(p)\in \M$ for any $p$ in a neighborhood of $\op$, and $\C^1$-smooth around $\op$ with
\begin{equation}\label{Jloc1}
\nabla s(p) = DG(x, p)^{-1} = \big(P_{T_\M(x)}\circ(\nabla\opsi(x)+\nabla^2_{xx}L(x, \mu))\vert_{T_\M(x)}\big)^{-1}\circ P_{T_\M(x)}.
\end{equation}
where $x = s(p)$ and $\mu\in \R^m$ is the unique vector satisfying $\nabla\Phi(x)^*\mu =  p-(\opsi(x) + \nabla \widehat f (x))$.
\end{Proposition}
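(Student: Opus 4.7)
The plan leans entirely on Theorem~\ref{thm:ps} and Corollary~\ref{cod_sgraph}: together they grant, in a neighborhood of $(\ox,\ov)$ relative to $\gph\sub f$, strict proto-differentiability of $\sub f$ as well as the equality of its graphical derivative, strict graphical derivative, and coderivative, all expressed by the formula \eqref{ps1}. Because $\opsi$ is $\C^1$, the local change of variables $(x,p)\mapsto(x,p-\opsi(x))$ identifies $\gph G$ with $\gph\sub f$ near $(\ox,\op)$, so $G$ inherits strict proto-differentiability. This places \eqref{cGE} in the setting of \cite[Proposition~4.1]{HaS23}, which delivers the equivalences among (a), (b), and (c) without further work.

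To bring (d) into the chain, I would compute $D^\ast G(\ox,\op)$. The smoothness of $\opsi$ and Corollary~\ref{cod_sgraph} give
\[
D^\ast G(\ox,\op)(w) = \nabla\opsi(\ox)^\ast w + D^\ast(\sub f)(\ox,\ov)(w) = \bigl(\nabla\opsi(\ox)+\nabla^2_{xx}L(\ox,\omu)\bigr)^\ast w + N_\M(\ox)
\]
for $w\in T_\M(\ox)$, and $\emptyset$ otherwise, where I rely on self-adjointness of $\nabla^2_{xx}L(\ox,\omu)$ to combine the two summands. The Mordukhovich criterion \cite[Theorem~9.43]{rw} then equates (a) with the implication $0\in D^\ast G(\ox,\op)(w)\Longrightarrow w=0$; since $N_\M(\ox)$ is a subspace that absorbs the sign, this reduces exactly to \eqref{ge1}.

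For the equivalence between (d) and (e), I would set $A:=\nabla\opsi(\ox)+\nabla^2_{xx}L(\ox,\omu)$ and observe that the linear endomorphism $B:=P_{T_\M(\ox)}\circ A|_{T_\M(\ox)}$ of the finite-dimensional space $T_\M(\ox)$ has adjoint $B^\ast=P_{T_\M(\ox)}\circ A^\ast|_{T_\M(\ox)}$; then \eqref{ge1} reads $\ker B^\ast=\{0\}$, equivalent in finite dimensions to bijectivity of $B$, which is exactly (e). For the trailing assertions, the local identity \eqref{ps2} confines $\gph\sub f$ to $\M\times\X$, so the Lipschitz selection $s$ produced by (c) automatically takes values in $\M$; strict differentiability of $s$ with $\nabla s(p)=DG(s(p),p)^{-1}$ follows from strict proto-differentiability of $G$, and solving the linear inclusion $q\in DG(x,p)(w)$ by applying $P_{T_\M(x)}$ and inverting $B(x)$ yields the explicit expression \eqref{Jloc1}.

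The main technical obstacle I anticipate lies in the opening step --- the clean transfer of strict proto-differentiability, and the resulting coincidence $D^\ast=D$, from $\sub f$ to $G=\opsi+\sub f$ across the $\C^1$ perturbation $\opsi$ --- since this underpins both the application of \cite[Proposition~4.1]{HaS23} and the invocation of the Mordukhovich criterion in its sharpest form.
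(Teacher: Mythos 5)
Your proposal is correct and follows the paper's overall architecture---Theorem~\ref{thm:ps} (together with Corollary~\ref{cod_sgraph}) supplying strict proto-differentiability of $\sub f$ and the formula \eqref{ps1} at all points of $\gph\sub f$ near $(\ox,\ov)$, \cite[Proposition~4.1]{HaS23} handling (a)$\Leftrightarrow$(b) (with (b)$\Leftrightarrow$(c) definitional), the localization \eqref{ps2} forcing the selection $s$ to be $\M$-valued, and projection of the inclusion $u\in DG(x,p)(w)$ onto $T_\M(x)$ yielding \eqref{Jloc1}---but two sub-arguments genuinely differ. For (a)$\Leftrightarrow$(d) the paper simply cites \cite[Theorems~4.2 and~4.3]{HaS23}, whereas you compute $D^*G(\ox,\op)$ by the exact coderivative sum rule for the $\C^1$ term $\opsi$, identify $D^*(\sub f)(\ox,\ov)$ with $D(\sub f)(\ox,\ov)$ via Corollary~\ref{cod_sgraph}, and apply the Mordukhovich criterion \cite[Theorem~9.43]{rw}; this is a legitimate and more self-contained derivation (the local closedness of $\gph G$ needed for the criterion is guaranteed by \eqref{ps2}), and your use of self-adjointness of $\nabla^2_{xx}L(\ox,\omu)$ to merge the two summands is exactly right. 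For (d)$\Leftrightarrow$(e) the paper passes to orthogonal complements and a dimension count, while your observation that $B:=P_{T_\M(\ox)}\circ A|_{T_\M(\ox)}$ has adjoint $P_{T_\M(\ox)}\circ A^*|_{T_\M(\ox)}$, so that \eqref{ge1} says precisely $\ker B^*=\{0\}$ and hence that $B$ is bijective, is cleaner linear algebra. The only glossed point is the concluding $\C^1$-smoothness of $s$: asserting that strict differentiability of $s$ ``follows from strict proto-differentiability of $G$'' implicitly uses that the strict graphical derivative of the Lipschitz localization is single-valued and linear at every $p$ near $\op$ (available because Theorem~\ref{thm:ps} and Proposition~\ref{prop:pscons}(b) make the hypotheses stable along $\gph\sub f$) and that strict differentiability at all nearby points upgrades to continuous differentiability; this is exactly the content of \cite[Theorem~4.3]{HaS23}, which the paper cites at this step, so your argument is sound but should invoke that result (or spell out these two facts) explicitly.
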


\begin{proof}
Since $\ov \in \ri \partial f(\ox)$, we infer from Theorem~\ref{thm:ps} that $\sub f$ is strictly proto-differentiable at $x$ for $v$ for all $(x, v)\in \gph\sub f$ sufficiently close to $(\ox, \ov)$ and its graphical derivative $D(\sub f)(x, v)$ is calculated by \eqref{ps1}.
Thus, the equivalence among assertions (a)--(d) is an immediate consequence of \cite[Proposition~4.1, {\rm(a)}\;$\Longleftrightarrow$\;{\rm(b)}]{HaS23} and \cite[Theorems~4.2~ and~4.3]{HaS23}.
It remains to verify that {\rm (d)} and {\rm (e)} are equivalent.
If (d) holds, taking the orthogonal complements from both sides of \eqref{ge1} and using \cite[Corollary~11.25]{rw} tell us  that \eqref{ge1} is equivalent to
\begin{equation}\label{MRcri2}
N_\M(\ox) + (\nabla\opsi(\ox) + \nabla^2_{xx}L(\ox, \omu))(T_\M(\ox)) = \X.
\end{equation}
This, combined with a similar argument as  the proof of \cite[Theorem~4.2, {\rm(a)}\;$\Longleftrightarrow$\;{\rm(b)}]{HaS23}, leads us to the desirable claim in {\rm (e)}.
Conversely, assuming {\rm (e)}, we arrive at 
$$\dim \big((\nabla\opsi(\ox) + \nabla^2_{xx}L(\ox, \omu))(T_\M(\ox))\big) = \dim T_\M(\ox)$$
and
$$(\nabla\opsi(\ox) + \nabla^2_{xx}L(\ox, \omu))(T_\M(\ox))\cap N_\M(\ox)=\{0\},$$
which together give us \eqref{MRcri2}, and hence \eqref{ge1}. This completes the proof of the equivalence of {\rm (d)} and {\rm (e)}.

Assume now that one of the   equivalent properties in {\rm (a)}--{\rm (e)} is satisfied.
By (c), we find neighborhoods $U$ and $V$ of $\ox$ and $\op$, respectively, such that the mapping $s(p):= U\cap G^{-1}(p), \, p\in V,$ is single-valued and Lipschitz continuous on $V$.
It then holds that $(s(p), p - \opsi(s(p))) \in (U\times \widetilde V)\cap\gph \sub f$, where $\widetilde V$ is an open set containing $(I_\X - \opsi\circ s)(V)$, where $I_\X$ stands for the identity mapping from $\X$ onto $\X$.
Obviously, $\widetilde V$ is a neighborhood of $\ov = \op-\opsi(\ox)$.
Shrinking $U$ and $V$, if necessary, we can conclude from \eqref{ps2} that $s(p)\in \M$ for all $p\in V$.
Recall that  $\sub f$ is strictly proto-differentiable at $x$ for $v$ for all $(x, v) \in \gph \sub f$ near $(\ox, \ov)$.
We then conclude via \cite[Theorem~4.3]{HaS23} that $s: V\to U\cap \M$ is $\C^1$-smooth around $\op$.
We proceed by justifying \eqref{Jloc1}.
Observe that metric regularity of $G$ holds at $x$ for $p$ for all $(x, p)\in \gph G$ sufficiently close to $(\ox, \op)$.
By shrinking $U$ and $V$, we can assume the latter property  holds for all $(x, p)\in (U \times V) \cap\gph G$ and also differentiability of $s$ at $p$.
Pick now $p\in V$ and $u\in \X$.
Since $s$ is differentiable at $p$, we get that $w:=\nabla s(p)(u) = Ds(p)(u) = DG^{-1}(p, x)(u)$, where $x:=s(p)\in U$.
The latter verifies the first equality in \eqref{Jloc1} and also gives us $u \in DG(x, p)(w)$.
Employing the sum rule for graphical derivative from \cite[Exercise~10.43(a)]{rw}, we have $u\in \nabla \opsi(x)w + D(\sub f)(x, p-\nabla \opsi(x))(w)$.
This, together with \eqref{ps1}, yields
$w\in T_\M(x)$ and $u \in  \nabla \opsi (x)w+\nabla^2_{xx}L(x, \mu)w + N_\M(x)$, where $\mu$ is the unique vector satisfying $\nabla\Phi(x)^*\mu = p - (\nabla\opsi(x)+\nabla \widehat f  (x))$.
The equivalence {\rm (a)} $\Longleftrightarrow$ {\rm (e)}, together with $G$ being metrically  regular at $x$ for $p$, tells us that the mapping $P_{T_\M(x)}\circ(\nabla\opsi(x)+\nabla^2_{xx}L(x, \mu))\vert_{T_\M(x)}: T_\M(x) \to T_\M(x)$ is one-to-one.
Projecting $u$ onto the subspace $T_\M(x)$, we have
\begin{equation*}\label{mr2}
P_{T_\M(x)}(u) = P_{T_\M(x)}((\nabla\opsi(x)+\nabla^2_{xx}L(x, \mu))(w)),
\end{equation*}
Since $w\in T_\M(x)$, we get from the latter that
\begin{equation*}
w 
= \big(P_{T_\M(x)}\circ(\nabla\opsi(x)+\nabla^2_{xx} L(x, \mu))\vert_{T_\M(x)}\big)^{-1}(P_{T_\M(x)}(u)),
\end{equation*}
which confirms \eqref{Jloc1} and hence completes the proof.
\end{proof}

Note that a characterization of continuous differentiability of the solution mapping $s$, defined in Proposition~\ref{prop:GE}, which resembles the one in \eqref{Jloc1}, was recently established  in \cite[Theorem~2.7]{DDJ23} via a different approach. However, the equivalence between metric regularity and strong metric regularity was not obtained therein.

As an application of the Proposition~\ref{prop:GE} above, we present next $\C^1$-smoothness of the proximal mapping of $\C^2$-partly smooth functions. 
Recall that proximal mapping of a function $f:\X\to\oR$ for a parameter $r>0$, denoted $\prox_{rf}$, is defined by
\begin{equation*}
\prox_{rf}(x) = \argmin_{w\in \X}\big\{f(w)+\tfrac{1}{2r}\|w-x\|^2\big\}, \quad x\in \X.
\end{equation*}
Moreover, we demonstrate that the proximal mapping is able to eventually identify the active manifold of a $\C^2$-partly smooth function.
The same results were established for projection mapping onto a prox-regular and $\C^2$-partly smooth set -- a set is called $\C^2$-partly smooth if  its indicator function  is $\C^2$-partly smooth-- in \cite[Theorem~3.3]{HaL04}, and for the proximal mapping of a prox-bounded, prox-regular, and $\C^2$-partly smooth function in \cite[Theorem~28]{DIL16}.
Proofs of  these results  rely mainly on \cite[Theorem~5.7]{Lew02},  which establishes stability of strong critical points of parametric $\C^2$-partly smooth functions.
Below, we  derive these results as an immediate consequence of  Proposition~\ref{prop:GE}. 
To begin with, recall that a function $f$ is said to be {prox-bounded} if there exists a real number $\alpha$ such that the function $f+\alpha \|\cdot\|^2$ is bounded from below on $\X$; see \cite[Exercise~1.24]{rw}.
It is worth mentioning that while we will assume the subdifferential continuity in our result below, which was not assumed in 
\cite{DIL16}, it is possible to drop using the discussion in Remark~\ref{sub_cont}.

\begin{Corollary}\label{cor:prox}
Let $f:\X\to \oR$ be a $\C^2$-partly smooth function at $\ox$ relative to a $\C^2$-smooth manifold $\M$ with local representation \eqref{mfold} and let $\ov  \in \ri \partial f(\ox)$.
Assume that $f$ is prox-regular and subdifferentially continuous at $\ox$ for $\ov$ and that $f$ is prox-bounded.
Then for any $r>0$ sufficiently small, the proximal mapping $\prox_{rf}$ is $\M$-valued and $\C^1$-smooth around $\ox + r \ov$ with
\begin{equation}\label{Jprox}
\nabla(\prox_{rf})(z) = \big(P_{T_\M(x)}\circ (I_\X+r\nabla^2_{xx}L(x, \mu))\vert_{T_\M(x)}\big)^{-1}\circ P_{T_\M(x)},
\end{equation}
where $L$ is taken from \eqref{lagf}, $x = \prox_{rf}(z)$, and $\mu\in \R^m$ is the unique vector satisfying $\nabla_xL(x, \mu)= (z - x)/r$.
\end{Corollary}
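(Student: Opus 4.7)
The plan is to recast the proximal mapping as the solution mapping of a generalized equation of the form \eqref{cGE} and then invoke Proposition~\ref{prop:GE}. Since $x \in \prox_{rf}(z)$ if and only if $z \in x + r \sub f(x)$, i.e. $z/r \in \opsi(x) + \sub f(x)$ for the $\C^1$-smooth mapping $\opsi(x) := x/r$, we set $\op := \ox/r + \ov$ and observe that $\ox$ solves the resulting generalized equation for $\op$. The standing hypotheses on $f$, together with $\ov = \op - \opsi(\ox) \in \ri \sub f(\ox)$, supply every assumption of Proposition~\ref{prop:GE} except the kernel condition \eqref{ge1}.

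The first technical step is to verify \eqref{ge1} for all sufficiently small $r > 0$. Since $\nabla \opsi(\ox) = (1/r) I_\X$ and $\nabla^2_{xx} L(\ox, \omu)$ is self-adjoint, any $w \in T_\M(\ox)$ satisfying $((1/r) I_\X + \nabla^2_{xx} L(\ox, \omu))(w) \in N_\M(\ox)$ must, after projecting onto $T_\M(\ox)$, satisfy $(I_{T_\M(\ox)} + r Q)(w) = 0$, where $Q := P_{T_\M(\ox)} \circ \nabla^2_{xx} L(\ox, \omu)|_{T_\M(\ox)}$ is a fixed bounded operator on the finite-dimensional subspace $T_\M(\ox)$. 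Consequently $I_{T_\M(\ox)} + r Q$ is invertible whenever $r < 1/(1 + \|Q\|)$, which forces $w = 0$ and establishes \eqref{ge1}.

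With \eqref{ge1} at our disposal, Proposition~\ref{prop:GE} furnishes, for each such $r$, a $\C^1$-smooth, $\M$-valued, single-valued Lipschitz localization $s$ of $G^{-1}$ around $\op$ for $\ox$, with Jacobian given by \eqref{Jloc1}. It remains to identify this localization with $\prox_{rf}$. The prox-regularity, subdifferential continuity, and prox-boundedness of $f$ together guarantee (cf. \cite[Theorem~4.7]{HaS23} and \cite[Proposition~13.37]{rw}) that, after shrinking $r$ further if necessary, $\prox_{rf}$ is itself single-valued and Lipschitz on a neighborhood of $\ox + r \ov$ with $\prox_{rf}(\ox + r \ov) = \ox$. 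Since $\prox_{rf}(z) \in G^{-1}(z/r)$ and remains close to $\ox$ for $z$ close to $\ox + r \ov$, it must coincide with $s(z/r)$.

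The Jacobian formula \eqref{Jprox} then follows from the chain rule $\nabla \prox_{rf}(z) = (1/r)\, \nabla s(z/r)$ applied to \eqref{Jloc1}: substituting $\nabla \opsi(x) = (1/r) I_\X$ and factoring $1/r$ out of the bracket turns $(1/r) I_\X + \nabla^2_{xx} L(x, \mu)$ into $(1/r)(I_\X + r\, \nabla^2_{xx} L(x, \mu))$, whose inverse on $T_\M(x)$ contributes a compensating $r$ that cancels the prefactor. The multiplier specified in the statement via $\nabla_x L(x, \mu) = (z - x)/r$ is precisely the one in Proposition~\ref{prop:GE} under the substitutions $p = z/r$ and $\opsi(x) = x/r$. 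The main difficulty in this plan lies in the identification step: while the abstract localization $s$ is provided by Proposition~\ref{prop:GE}, matching it with $\prox_{rf}$ rests crucially on the prox-boundedness hypothesis, which secures nonemptiness of the proximal argmin and, together with prox-regularity, its local single-valuedness.
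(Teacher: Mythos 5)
Your argument is correct and rests on the same machinery as the paper, namely Proposition~\ref{prop:GE} combined with \cite[Theorem~13.37]{rw}, but you enter the proposition through a different door. The paper sets $\opsi=I_\X$, $\op=\ox+r\ov$ and works with $G=I_\X+r\sub f$, so that \cite[Theorem~13.37]{rw} directly exhibits $\prox_{rf}=(I_\X+rT_\epsilon)^{-1}$ as a Lipschitz continuous single-valued localization of $G^{-1}$ around $\ox+r\ov$ for $\ox$; this is exactly condition (c) of Proposition~\ref{prop:GE}, so the ``moreover'' part immediately yields $\M$-valuedness, $\C^1$-smoothness, and \eqref{Jprox}, with no need to check \eqref{ge1}, no rescaling of the parameter, and no chain rule. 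You instead fold $1/r$ into $\opsi(x)=x/r$, verify condition (d) (i.e.\ \eqref{ge1}) by the small-$r$ operator-norm argument $(I_{T_\M(\ox)}+rQ)w=0\Rightarrow w=0$, and then must carry out an identification step $\prox_{rf}(z)=s(z/r)$ followed by the chain rule to cancel the factors of $r$ in \eqref{Jloc1}. Your check of \eqref{ge1} is valid, but it is logically redundant once you invoke \cite[Theorem~13.37]{rw}: that theorem already gives condition (c), which is equivalent to (d) under the standing hypotheses; its only payoff is the self-contained observation that the regularity criterion holds automatically for all small $r$. Both routes deliver the same conclusion and the same multiplier identification $\nabla_xL(x,\mu)=(z-x)/r$; the paper's is simply shorter. (Minor slip: the relevant result in \cite{rw} is Theorem~13.37, not Proposition~13.37, and the appeal to \cite[Theorem~4.7]{HaS23} is unnecessary.)
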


\begin{proof}
Let $\epsilon>0$ and $\rho\geq 0$ be constants for which \eqref{prox} holds. 
Fixing $r \in (0, 1/\rho)$, with convention $1/0 = \infty$, we can deduce from \cite[Theorem~13.37]{rw} that $\prox_{rf} = (I_\X+rT_\epsilon)^{-1}$,
where $T_\epsilon$ is a localization of $\sub f$ around $(\ox, \ov)$ whose graph coincides with $\gph\sub f$ in $\B_\epsilon(\ox, \ov)$,   is moreover single-valued and Lipschitz continuous around $\ox+ r \ov$.
Considering $\opsi = I_\X$ and $\op = \ox+r\ov$, we then infer from Proposition~\ref{prop:GE} that $\prox_{rf}$, a Lipschitz continuous single-valued localization of the inverse mapping of $G = I_\X+r\sub f$ around $\ox + r\ov$ for $\ox$, is $\M$-valued and $\C^1$-smooth around that point.
The formula for the Jacobian of $\prox_{rf}$ at $z$ sufficiently close to $\ox + r\ov$ in \eqref{Jprox} follows immediately from that in \eqref{Jloc1}, which completes the proof.
\end{proof}


We close this section with an extension of Proposition~\ref{prop:GE} into stability analysis of the generalized equation 
\begin{equation}\label{GE}
0 \in \psi(p, x) +\sub f(x)
\end{equation}
associated with the subgardient mapping of a $\C^2$-partly smooth function. 
We will deal with the mapping $\psi: \cP\times \X\to \X$ that concerns with a general perturbation represented by parameter $p\in \cP$, where $\cP\subset \P$ is an open subset of a normed space $\P$.
Define the solution mapping $S: \P\tto\X$ by
\begin{equation}\label{sol}
S(p) := \big\{x\in \X\, \big|\, 0\in\psi(p, x) + \sub f (x)\big\}
\end{equation}
for $p\in \cP$ and $S(p) = \emptyset$ otherwise.
We are interested in conditions ensuring local single-valuedness and certain differential stability properties of $S$ around a given parameter $\op\in \cP$.
With characterizations for strong metric regularity in Proposition~\ref{prop:GE}, we are in a position to deliver the last result of this section establishing semidifferentiability and differentiability of the solution mapping in \eqref{sol}.
This will be the driving force behind our investigation in the next section.
Recall that for a set-valued mapping $F: \X \tto \Y$ with $(\ox, \oy) \in \gph F$, if the limit
\begin{equation*}
\lim_{\substack{
t\searrow 0\\w'\to w}}
\frac{F(\ox+tw')-\oy}{t}
\end{equation*}
exists for all $w\in \X$, then $F$ is said to be semidifferentiable at $\ox$ for $\oy$ and the limit, which equals to  $DF(\ox, \oy)(w)$, is called the semiderivative of $F$ at $\ox$ for $\oy$ and $w$.
For a single-valued mapping $F$, we simply call the limit, denoted by $DF(\ox)(w)$, the semiderivative of $F$ at $\ox$ for  $w$, and $F$ is said to be semidifferentiable at $\ox$.
For convenience, we now recall in addition some notion from \cite[Theorem~2B.7]{DoR14}--an extended implicit mapping theorem that will be exploited in our next proof.
A mapping $\psi: \cP\times \X\to \X$ is Lipschitz continuous with respect to $p$ uniformly in $x$ around $(\op, \ox)$ with modulus $\ell>0$ if there exists $\epsilon>0$ such that
\begin{equation}\label{calp}
\big\|\psi(p, x) -\psi(p', x)\big\|\leq\ell\|p-p'\|\quad\mbox{ for all } \; p, p'\in \B_\epsilon(\op)\;\;
\mbox{ and all }\; x\in \B_\epsilon(\ox).
\end{equation}
A mapping $h: \X \to \X$ is a strict estimator of $\psi$ with respect to $x$ uniformly in $p$ at $(\op, \ox)$ with constant $\tau>0$ if $h(\ox) = \psi(\op, \ox)$ and there exists $\epsilon>$ such that
\begin{equation*}\label{sest}
\|r(p, x) - r(p, x')\|\leq \tau\,\|x-x'\|\;
\textrm{ for all }\; x, x'\in \B_\epsilon(\ox)\; \textrm{ and all }\; p\in \B_\epsilon(\op),
\end{equation*}
where $r(p, x)= \psi(p, x)-h(x)$.

\begin{Theorem}\label{thm:GE}
Let $(\op, \ox)\in \cP\times\X$ and $\psi: \cP \times \X \to \X$ satisfy the following conditions:
\begin{itemize}[noitemsep,topsep=2pt]
\item [{\rm (i)}] for any $p\in \cP$ near $\op$, the mapping $x\mapsto\psi(p, x)$ is $\C^1$-smooth around $\ox$ and $\nabla_x\psi(\cdot, \cdot)$ is jointly continuous at $(\op, \ox)$; and

\item [{\rm (ii)}] $\psi$ is Lipschitz continuous with respect to $p$ uniformly in $x$ around $(\op, \ox)$.
\end{itemize}
Assume that $f:\X\to\oR$ satisfies the hypotheses in Proposition~{\rm\ref{prop:GE}},   $-\psi(\op, \ox) \in \ri \sub f(\ox)$ and 
that \eqref{ge1} holds for $\opsi= \psi(\op, \cdot)$. 
Then, there exist a neighborhood $U$ of $\ox$ and a positive constant $\ve$  such that $U\cap S(p)$ is a singleton for all $p\in \B_{\ve}(\op)$ and the mapping $s:=U\cap S(\cdot)$ is $\M$-valued and Lipschitz continuous in $\B_\ve(\op)$,  where the solution mapping $S$ is taken from \eqref{sol}.
Moreover,  
\begin{itemize}[noitemsep,topsep=2pt]
\item [{\rm (a)}] if $\psi$ has a strong partial semiderivative with respect to $p$ at $(\op, \ox)$ in the sense that its partial graphical derivative $D_p\psi(\op, \ox):= D(\psi(\cdot, \ox))(\op): \P \tto \X$ is everywhere single-valued and  for every $\nu>0$ there exists $\epsilon>0$ such that
\begin{equation}\label{strongB}
\|\psi(\op + q, x) - \psi(\op, x) - D_p\psi(\op, \ox)(q)\| \leq \nu\|q\|
\end{equation}
for all $x\in \B_{\epsilon}(\ox)$ and all $q\in\epsilon\B\subset \P$, then $s$ is semidifferentiable at $\op$  with
\begin{equation}\label{sdS}
Ds(\op)(q) = -DG(\ox, 0)^{-1}(D_p\psi(\op, \ox)(q)), \quad q \in \P,
\end{equation}
where $G:= \psi(\op, \cdot)+\sub f$ and $DG(\ox, 0)^{-1}$ is calculated by \eqref{Jloc1};
\item [{\rm (b)}] if $\psi$ is differentiable with respect to $p$ at $(\op, \ox)$ and for every $\nu>0$ there exists $\epsilon>0$ such that \eqref{strongB} with $\nabla_p\psi(\op, \ox)(\cdot)$ in the place of $D_p\psi(\op, \ox)(\cdot)$ is satisfied for all $x\in \B_{\epsilon}(\ox)$ and all $q\in\epsilon\B\subset \P$, then $s$ is differentiable at $\op$  with
\begin{equation*}
\nabla s(\op) (q)= -DG(\ox, 0)^{-1}(\nabla_p\psi(\op, \ox)(q)), \quad q \in \P,
\end{equation*}
where $G$ and $DG(\ox, 0)$ are defined in {\rm(}a{\rm)}. 
\end{itemize}
\end{Theorem}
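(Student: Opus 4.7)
The plan is to first build the Lipschitz single-valued localization $s$ by feeding the strong metric regularity of the unperturbed mapping $G(x):=\psi(\op,x)+\sub f(x)$ from Proposition~\ref{prop:GE} into the extended implicit function theorem \cite[Theorem~2B.7]{DoR14}, and then to compute the (semi)derivative of $s$ by passing to the limit in the defining inclusion, exploiting the strict proto-differentiability of $\sub f$ at $(\ox,\ov)$ afforded by Theorem~\ref{thm:ps}.

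For the localization, note that with $\opsi=\psi(\op,\cdot)$ the assumptions $-\psi(\op,\ox)\in\ri\sub f(\ox)$ and \eqref{ge1} are exactly those of Proposition~\ref{prop:GE}, which therefore yields strong metric regularity of $G$ at $\ox$ for $0$ together with the explicit formula \eqref{Jloc1} for $DG(\ox,0)^{-1}$. To incorporate the $p$-dependence I would invoke \cite[Theorem~2B.7]{DoR14} with the linearization $h(x):=\psi(\op,\ox)+\nabla_x\psi(\op,\ox)(x-\ox)$ as strict estimator: hypothesis~(i) forces the remainder $r(p,x)=\psi(p,x)-h(x)$ to satisfy $\|r(p,x)-r(p,x')\|\le\tau\|x-x'\|$ with $\tau$ as small as desired on a shrinking neighborhood of $(\op,\ox)$, while hypothesis~(ii) supplies the uniform Lipschitz bound in $p$. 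This delivers the Lipschitz single-valued localization $s$ of $S$ around $\op$ for $\ox$, and its $\M$-valuedness is forced by \eqref{ps2}: for $p$ near $\op$ the pair $(s(p),-\psi(p,s(p)))$ falls in $\B_\ve(\ox,\ov)\cap\gph\sub f=\B_\ve(\ox,\ov)\cap\gph(\nabla\widehat f+N_\M)$, so $s(p)\in\M$.

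For part~(a), I would fix $q\in\P$ and arbitrary sequences $t_k\searrow 0$, $q_k\to q$, and set $x_k:=s(\op+t_kq_k)$, $v_k:=-\psi(\op+t_kq_k,x_k)$, $w_k:=(x_k-\ox)/t_k$. Lipschitz continuity of $s$ combined with~(ii) bounds $\{w_k\}$, while $(x_k,v_k)\to(\ox,\ov)$ with $(x_k,v_k)\in\gph\sub f$. The decomposition
\begin{equation*}
\frac{v_k-\ov}{t_k}=-\frac{\psi(\op+t_kq_k,x_k)-\psi(\op,x_k)}{t_k}-\frac{\psi(\op,x_k)-\psi(\op,\ox)}{t_k},
\end{equation*}
combined with \eqref{strongB} and the positive homogeneity plus continuity of the single-valued graphical derivative $D_p\psi(\op,\ox)$, drives the first term to $D_p\psi(\op,\ox)(q)$, while the $\C^1$-smoothness of $\psi(\op,\cdot)$ drives the second to $\nabla_x\psi(\op,\ox)(w)$ for any cluster point $w$ of $\{w_k\}$. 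Since Theorem~\ref{thm:ps} guarantees (strict) proto-differentiability of $\sub f$ at $(\ox,\ov)$, passing to the limit and using the sum rule for graphical derivatives yields $-D_p\psi(\op,\ox)(q)\in DG(\ox,0)(w)$. Invertibility of $DG(\ox,0)$ from Proposition~\ref{prop:GE}(e) pins down $w=-DG(\ox,0)^{-1}(D_p\psi(\op,\ox)(q))$; uniqueness of the cluster point forces full convergence, and \eqref{sdS} follows. Part~(b) is then immediate from~(a): the assumption upgrades $D_p\psi(\op,\ox)$ to the linear mapping $\nabla_p\psi(\op,\ox)$, and \eqref{Jloc1} exhibits $DG(\ox,0)^{-1}$ as a linear operator, so the semiderivative of $s$ is linear in $q$, giving differentiability.

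The hardest step is the constant bookkeeping in the localization part: the Lipschitz modulus of the unperturbed $G^{-1}$ coming from Proposition~\ref{prop:GE} must be reconciled with the strict-estimator constant $\tau$ so that their product stays strictly below one on a common neighborhood, and the resulting $\ve$-neighborhood has to still lie inside the one on which \eqref{ps2} holds to secure $\M$-valuedness. Once this has been arranged, the limiting argument is largely routine given strict proto-differentiability of $\sub f$ at $(\ox,\ov)$.
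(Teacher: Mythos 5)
Your proposal is correct and follows essentially the same route as the paper's proof: strong metric regularity supplied by Proposition~\ref{prop:GE} is fed into \cite[Theorem~2B.7]{DoR14} to produce the Lipschitz single-valued localization $s$, $\M$-valuedness is read off from \eqref{ps2}, and the (semi)derivative formula is obtained by passing to the limit in the defining inclusion via the sum rule for graphical derivatives and the single-valuedness and linearity of $DG(\ox,0)^{-1}$ from \eqref{Jloc1}, exactly as the paper does (the paper merely shortcuts your cluster-point argument by citing \cite[Exercise~9.25]{rw}). The one detail to tidy up is the estimator bookkeeping: if you take the linearization $h(x)=\psi(\op,\ox)+\nabla_x\psi(\op,\ox)(x-\ox)$ as the strict estimator, then \cite[Theorem~2B.7]{DoR14} asks for a Lipschitz single-valued localization of $(h+\sub f)^{-1}$, not of $G^{-1}$; this is harmless because Proposition~\ref{prop:GE} applies verbatim with $\opsi:=h$ (its criterion \eqref{ge1} depends only on $\nabla_x\psi(\op,\ox)$ and $\nabla^2_{xx}L(\ox,\omu)$), or you can simply take $\psi(\op,\cdot)$ itself as the estimator—as the paper does—with hypothesis (i) supplying the arbitrarily small estimator constant.
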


\begin{proof}
It is clear from $-\psi(\op, \ox) \in \ri \sub f(\ox)$ that $\ox \in S(\op)$.
We shall utilize \cite[Theorem~2B.7]{DoR14} to establish the existence of a Lipschitz continuous single-valued localization of $S$ around $\op$ for $\ox$.
To begin with, recall that $\opsi = \psi(\op, \cdot)$ is $\C^1$-smooth around $\ox$ and $-\opsi(\ox) = -\psi(\op, \ox) \in \ri \sub f(\ox)$ and that the condition \eqref{ge1} is fulfilled.
We then infer from Proposition~\ref{prop:GE} that the inverse mapping $G^{-1}$ of $G:= \opsi +\sub f$ has a Lipschitz continuous single-valued localization around $0$ for $\ox$ that is $\M$-valued and $\C^1$-smooth around $0$.  
Let $U$ be a neighborhood of $\ox$ and $\epsilon'>0$ such that 
\begin{equation}\label{map_gamma}
\gamma(v):= U\cap G^{-1}(v)     
\end{equation}
is a singleton for all $v\in \epsilon'\B$ and the mapping $\gamma:\epsilon'\B\to U\cap \M$ is $\C^1$-smooth. 
Let $\kappa>0$ be the Lipschitzian constant of $\gamma$ in $\epsilon'\B$.
We now show that $\psi(\op, \cdot)$ is a strict estimator of $\psi$ with respect to $x$ uniformly in $p$ at $(\op, \ox)$ with a constant $\tau\in (0, \kappa^{-1})$.
Let $r(p, x):= \psi(p, x) -\psi(\op, x)$ for $(p, x)\in\cP\times \X$ and $\epsilon''>0$ be such that $\B_{\epsilon''}(\op)\subset \cP$.
Pick $p\in \B_{\epsilon''}(\op)$ and $x, x'\in \B_{\epsilon''}(\ox)$ arbitrarily and observe that
\begin{align*}
\|r(p, x)-r(p, x')\| &\leq \sup_{y\in [x,\, x']}\|\nabla_xr(p, y)\|\cdot\|x-x'\|\nonumber\\
&=\sup_{y\in [x,\, x']}\|\nabla_x\psi(p, y) - \nabla_x\psi(\op, y)\|\cdot\|x-x'\|
\leq \tau\|x-x'\|,
\end{align*}
where $[x, x']:= \{(1-t)x+tx'\, |\, 0\leq t\leq 1\}$ and
$\tau:= \sup_{p\in \B_{\epsilon''}(\op),\, y\in \B_{\epsilon''}(\ox)}\|\nabla_x\psi(p, y) - \nabla_x\psi(\op, y)\|$.
Recalling that $\nabla_x\psi$ is jointly continuous at $(\op, \ox)$, we can assume without loss of generality that $\tau<\kappa^{-1}$, for otherwise we could select a smaller $\epsilon''$.
Thus, $\opsi = \psi(\op, \cdot)$ is a strict estimator of $\psi$ with respect to $x$ uniform in $p$ at $(\op, \ox)$ with constant $\tau$ satisfying $\kappa\tau<1$.
Appealing now to \cite[Theorem~2B.7]{DoR14}, we conclude that the solution mapping $S$ from \eqref{sol} has a Lipschitz continuous single-valued localization around $\op$ for $\ox$.
We then find a neighborhood of $\ox$, which we denote by the same $U$ as above for simplicity, and $\ve>0$ such that the mapping $s:\cP\supset\B_\ve(\op)\to U$, defined by $s(p)=U\cap S(p)$, is single-valued and Lipschitz continuous on $\B_\ve(\op)$.
In view of \eqref{sol}, we get for all $p\in \B_\ve(\op)$ that
\begin{equation*}
   -(\psi(p, s(p))- \psi(\op, s(p))) \in (\opsi+\partial f)(s(p)),
\end{equation*}
or, equivalently, $s(p) \in G^{-1}(r(p, s(p)))$. 
Shrinking neighborhoods of $\ox$ and $\op$ if necessary, we can infer from \eqref{calp} that $\|r(p, s(p))\|\leq \ell\|p-\op\|< \epsilon'$ for all $p\in \B_\ve(\op)$.
Using \eqref{map_gamma}, we  arrive at $s(p) \in U\cap G^{-1}(r(p, s(p))) = \{\gamma(r(p, s(p)))\}\subset \M$ for all $p\in \B_\ve(\op)$.
In summary, we have showed that $S$ has a single-valued localization $s: \B_\ve(\op)\to U$ that is $\M$-valued and Lipschitz continuous on $\B_\ve(\op)$.

We now turn to differential stability properties of the mapping $s$ above. 
Due to Lipschitz continuity of $s$ on $\B_\ve(\op)$, to prove that $s$ is semidifferentiable (respectively, differentiable) at $\op$ it suffices to show that $Ds(\op)$ is single-valued (respectively, $Ds(\op)$ is single-valued and linear); see  \cite[Exercise~9.25(a)--(b)]{rw}.
Pick an arbitrary $q\in \P$ and assume that $w\in Ds(\op)(q)$. 
By definition, there are sequences $t_k\searrow0$, $q^k \to q$ in $\P$, and $w^k\to w$ in $\X$ with $\ox +t_kw^k =s(\op+t_kq^k)$.
The latter yields $\ox+t_kw^k\in U$ and 
\begin{align*}
0&\in \psi(\op+t_kq^k, \ox+t_kw^k)+\sub f(\ox+t_kw^k)\\
&= G(\ox+t_kw^k)  +\psi(\op+t_kq^k, \ox+t_kw^k)-\psi(\op, \ox+t_kw^k)
\end{align*}
Let $\nu>0$ be arbitrarily small.
By {\rm (a)},   $\psi$ has a strong partial semiderivative with respect to $p$ at $(\op, \ox)$. So, we get from the above inclusion that
\begin{equation*}
-D_p\psi(\op, \ox)(q^k) \in G(\ox+t_kw^k)/t_k +\nu\|q^k\|\B
\end{equation*}
for all $k$ sufficiently large.
Passing to the limit as $k\to\infty$ and observing that $\nu$ was arbitrary   give us 
$-D_p\psi(\op, \ox)(q) \in DG(\ox, 0)(w)$,
or, equivalently, $w \in DG(\ox, 0)^{-1}(-D_p\psi(\op, \ox)(q))$.
It follows from  \eqref{ge1} and Proposition~\ref{prop:GE} that $DG(\ox, 0)^{-1}$ is calculated by \eqref{Jloc1}.
Thus, $DG(\ox, 0)^{-1}$ is single-valued and linear. We then have
\begin{equation*}
Ds(\op)(q)=\{w\} = -DG(\ox, 0 )^{-1}(D_p\psi(\op, \ox)(q)).
\end{equation*}
Since $q\in \P$ was chosen arbitrarily, the latter verifies semidifferentiability of $s$ at $\op$ and  \eqref{sdS}.
For {\rm (b)}, we can  argue similarly to drive  
\begin{equation*}
Ds(\op)(q)=\{w\} = -DG(\ox, 0 )^{-1}(\nabla_p\psi(\op, \ox)(q)) \quad\mbox{ for all }\; q\in \P.
\end{equation*}
Since $DG(\ox, 0)^{-1}$ is single-valued and linear,
the latter justifies differentiability of $s$ at $\op$ and the claimed formula for $\nabla s(\op)$ and hence completes the proof.
\end{proof}


\section{Asymptotics of Stochastic Programs with $\C^2$-Partly Smooth Regularizers}\label{sect:asym}
Consider the generic regularized stochastic problem
\begin{equation}\label{P}
\mini_{x\in \X}\; f(x):= \varphi(x) + \theta(x), \quad\mbox{ with }\; 
\varphi(x):= \E[\hph(x; \Z)],\tag{P}
\end{equation}
where $\Z$ is a random vector whose probability distribution $\Prob$ is supported on a measurable space $(\Xi, \A)$ and stands for a data sample and $\hph:\X\times \Xi \to \oR$ is a loss function.
The regularizer $\theta: \X\to \oR$ in \eqref{P} may either encode a geometric constraint or  promote desirable low-complexity structure, such as sparsity or low-rank, for a minimizer.
We assume that the function $\varphi(\cdot) = \E[\widehat \ph (\cdot; \Z)]$, is well-defined within some given domain, where the expectation $\E$ is taken with respect to $\Z\sim\Prob$ on $(\Xi, \A)$.
Let $\U$ be a bounded domain in $\X$, where we expect that a solution $\ox$ to the true problem \eqref{P} exists.
For each $x\in \U$,  the expected value $\varphi(x)$ can be approximated through a {\em sample average approximation} (SAA) estimator.
Namely, given $k$ data samples $\Z_1, \ldots, \Z_k$, independent and identically distributed (iid) copies of the random vector $\Z$, we   use the averaging function of $\hph(\cdot; \Z_i),\, i = 1, \ldots, k,$ in  place of $\varphi$ in \eqref{P} and then consider the so-called {\em SAA problem} of \eqref{P}, which is formulated as
\begin{equation}\label{aP}
\mini_{x\in \X}\; f_k(x):= \frac{1}{k}\sum_{i=1}^k\hph(x; \Z_i) + \theta(x).\tag{P$_k$}
\end{equation}
The emphasis of this section is on asymptotics of sequences of solutions to the SAA problem \eqref{aP}.
Our analysis closely follows developments of sensitivity analysis and asymptotic theory made in \cite{KiR92, KiR93} by King and Rockafellar  for (stochastic) generalized equations under {proto-differentiability}, {\em subinvertibility}, and single-valuedness of the inverse graphical derivatives; see \cite[Assumptions~M.2--M.4]{KiR93} and discussions therein.
According to \cite[Section~3]{KiR92}, a set-valued mapping $F:\X\tto\Y$ is subinvertible at $(\ox, 0)$ if $0\in F(\ox)$ and there exist a compact convex neighborhood $U\subset \X$ of $\ox$, a positive constant $\ve$, and a nonempty convex-valued mapping $G:\ve\B\tto U$ such that $\ox\in G(0)$ and $\gph G$ is a closed subset of $(\ve\B\times U)\cap\gph F^{-1}$.
If $F^{-1}$ admits a continuous selection $x(y)$ around $0$ with $x(0) = \ox$, then $F$ is clearly subinvertible at $(\ox, 0)$.
Assume now that $F:\X\tto\X$ is a maximal monotone mapping; see  \cite[Definitions~12.1 and 12.5]{rw} for the definition of this concept.
It was shown in \cite[Theorem~5.1]{KiR92} that the condition $DF(\ox, 0)^{-1}(0)=\{0\}$ is sufficient for subinvertibility of $F$ at $(\ox, 0)$.
Recalling also from \cite[Theorem~12.65]{rw} that the single-valued property of $DF(\ox, 0)^{-1} = DF^{-1}(0, \ox)$ is equivalent to semidifferentiability of $F^{-1}$ at $0$ for $\ox$. Recall that 
Theorem~\ref{thm:GE} provides sufficient conditions for semidifferetiability of solution mappings to generalized equations  associated with $\C^2$-partly smooth functions. 
In order to exploit our analysis in Section~\ref{sect:sTED} in analyzing asymptotics of the stochastic generalized equation associated with \eqref{P}, we focus on objective functions that consist of a $\C^2$-smooth loss function $\varphi$ in conjunction with a $\C^2$-partial smoothness regularizer $\theta$.
Fixing these settings, we are going to study the asymptotic distribution of solutions to
\begin{equation}\label{sge}
0 \in \frac{1}{k}\sum_{i=1}^k \nabla_x\hph(x; \Z_i) +\sub \theta(x),
\end{equation}
which is referred to as the SAA generalized equation of the true variational system corresponding to problem \eqref{P} given by
\begin{equation}\label{ge}
0 \in \E[\nabla_x\hph(x; \Z)] + \sub\theta(x).
\end{equation}
To begin with, we formally state standing assumptions in our analysis.

\begin{Assumption}[integrability and smoothness]\label{ass:S}
\begin{itemize}[noitemsep,topsep=2pt]
\item [(S.1)] The function $\hph:\U \times \Xi\to\oR$ is measurable in the second variable $\xi\in \Xi$ for each $x\in \U$, a bounded domain in $\X$. 

\item [(S.2)]  For almost every $\xi\in\Xi$, the function $x\mapsto\hph(x; \xi)$ is $\C^2$-smooth on $\U$.

\item [(S.3)] There exists $g: \Xi\to \R_+$ with $\E[g(\Z)]<\infty$ such that
\begin{equation*}
\max\big\{|\hph(x; \xi)|,\, \|\nabla_x\hph(x; \xi)\|, \|\nabla^2_{xx}\hph(x; \xi)\|\big\}\leq g(\xi)
\end{equation*}
 for almost every $\xi\in \Xi$ and for all $x\in \U$.

\item [(S.4)] There exists $\ell: \Xi \to \R_{++}$ with $\E[\ell(\Z)]<\infty$ satisfying
\begin{equation*}
\max\big\{|\hph(x; \xi) - \hph(x'; \xi)|,\, \|\nabla_x\hph(x; \xi) - \nabla_x\hph(x'; \xi)\|\big\} \leq \ell(\xi)\|x-x'\|
\end{equation*}
for almost every $\xi\in \Xi$ and for all $x, x'\in \U$.
\end{itemize}
Here, in {\rm (S.3)--(S.4)}, the gradients $\nabla_x\hph(\cdot; \xi)$ and Hessians $\nabla^2_{xx}\hph(\cdot; \xi)$  of $\hph(\cdot; \xi)$ exist for almost every $\xi\in \Xi$, by {\rm (S.2)}.
\end{Assumption}

Note from assumptions {\rm (S.1)--(S.4)} and \cite[Theorem~9.64]{SDR09} that the expectation function $\varphi = \E[\hph(\cdot; \Z)]$ is well-defined and, moreover, twice continuously differentiable with
\begin{equation*}
\nabla\varphi(x) = \E[\nabla_x\hph(x; \Z)]\quad\mbox{ and }\quad
\nabla^2\varphi(x) = \E[\nabla^2_{xx} \hph(x; \Z)], \quad x\in \U.
\end{equation*}

\begin{Assumption}[analytical assumptions]\label{ass:A}
Assume that $\ox\in \dom \th$ for which the following conditions are satisifed:
\begin{itemize}[noitemsep,topsep=2pt]
\item [(A.1)] $-\nabla\varphi(\ox) \in \ri\sub \theta (\ox)$.

\item [(A.2)] $\theta$ is $\C^2$-partly smooth at $\ox$ relative to a $\C^2$-smooth manifold $\M$ with the local representation \eqref{mfold} and $\widehat\theta: \X\to\R$ is a $\C^2$-smooth representation of $\theta$ around $\ox$ relative to $\M$; $\theta$ is  prox-regular and subdifferentially continuous at $\ox$ for $-\nabla\varphi(\ox)$.

\item [(A.3)] The second-order condition  
\begin{equation}\label{A.3}
\left\{w \in T_\M(\ox)\, \big|\, \nabla^2_{xx} L(\ox, \omu)(w)\in N_\M(\ox)\right\} = \{0\}
\end{equation}
holds, where $L(x, \mu):= \varphi(x) + \widehat\theta(x) + \la \mu, \Phi(x)\ra$ for any $(x, \mu)\in \X\times \R^m$ and where $\omu\in \R^m$ is the unique vector, satisfying $\nabla\Phi(\ox)^*\omu = - (\nabla\varphi(\ox) + \nabla \widehat \theta (\ox))$.
\end{itemize}
\end{Assumption}

Let $\P$ be a separable Banach space equipped with its Borel algebra $\mathscr{B}(\P)$.
Recall from  \cite[Section~3]{Kin89} that a sequence $\{{\bf p}_k\}_{k\in\N}$ of random vectors ${\bf p}_k: \Xi \to \P$ is said to satisfy a {\em generalized central limit} formula if there exist a limit $\bar p\in \P$, a sequence $\{t_k\}_{k\in\N}$ of positive scalars decreasing to $0$, and a limit distribution $q$ such that
$({\bf p}_k - \bar p)/t_k \xrightarrow{\D} q$, where the notation $\xrightarrow{\D}$ stands for convergence in distribution.
The following lemma establishes asymptotic behavior of the sequence of stochastic gradient mappings
\begin{equation}\label{sg}
\nabla\hph_k:= \tfrac{1}{k}\sum_{i=1}^k\nabla_x\hph(\cdot; \Z_i), \quad k\in \N,
\end{equation}
taken from \eqref{sge}.
In what follows, we use $\C^1(\U, \X)$ to denote the space of $\C^1$-smooth mappings $\phi:\U\to\X$ endowed with the norm 
\begin{equation*}
\|\phi\|_{1,\, \U}: = \sup_{x\in \U}\|\phi(x)\|+\sup_{x\in \U}\|\nabla\phi(x)\|,
\end{equation*}
where $\phi(x)\in \X$ and $\nabla\phi(x) \in \L(\X, \X)$, the space of linear operators from $\X$ into $\X$ itself equipped with the operator norm.
And, as usual, $\mathscr{B}(\X)$ and $\mathscr{B}(\C^1(\U, \X))$ stand for the Borel algebras on $\X$ and $\C^1(\U, \X)$, respectively.

\begin{Lemma}\label{lem:CL}
Suppose that $\hph:\X \times \Xi:\to \oR$ satisfies Assumption~{\rm\ref{ass:S}}.
Then, for each $k$, the stochastic gradient mapping $\nabla\hph_k$ in \eqref{sg} is a random vector in $\C^1(\U, \X)$, and $\nabla\hph_k$ converges to $\nabla\varphi$ with probability one   in $\C^1(\U,\X)$.
Moreover, one has
\begin{equation}\label{CL}
\sqrt{k}\big(\nabla\hph_k - \nabla\varphi)\xrightarrow{\D}\phi 
\end{equation}
as random vectors in $\C^1(\U, \X)$, where $\phi$ is normally distributed with mean $0$ and variance 
\begin{equation*}
\Sigma(x) :=\Var \nabla_x\hph(x; \Z)= \E[(\nabla_x\hph(x; \Z) - \nabla\varphi(x))(\nabla_x\hph(x; \Z) - \nabla\varphi(x))^\top], \quad  x\in \U.
\end{equation*}
\end{Lemma}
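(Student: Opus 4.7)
The plan is to realize the sample mean $\nabla\hph_k$ as an i.i.d.\ average of random elements in the separable Banach space $\C^1(\U, \X)$ and then apply classical Banach-space versions of the strong law of large numbers (SLLN) and the central limit theorem (CLT). I would proceed in three steps.

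First I would check that each $\nabla_x\hph(\cdot;\Z_i)$ defines a Borel random element of $\C^1(\U, \X)$. Since $\overline\U$ is compact in the finite-dimensional space $\X$, the space $(\C^1(\U, \X), \|\cdot\|_{1,\U})$ is a separable Banach space. Joint measurability furnished by (S.1) together with the $\C^2$-smoothness in (S.2) implies that the map $\xi\mapsto\nabla_x\hph(\cdot;\xi)$ is Borel measurable from $(\Xi,\A)$ into $\C^1(\U,\X)$, and hence so is the finite average $\nabla\hph_k$.

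Second, for the almost sure convergence, the bound (S.3) yields
$$
\|\nabla_x\hph(\cdot;\Z)\|_{1,\U} = \sup_{x\in\U}\|\nabla_x\hph(x;\Z)\| + \sup_{x\in\U}\|\nabla^2_{xx}\hph(x;\Z)\| \le 2g(\Z),
$$
which is integrable since $\E[g(\Z)^2]<\infty$ implies $\E[g(\Z)]<\infty$. Bochner integrability then guarantees $\E[\nabla_x\hph(\cdot;\Z)]=\nabla\varphi$ in $\C^1(\U,\X)$, and the Banach-space-valued SLLN delivers $\nabla\hph_k\to\nabla\varphi$ almost surely in $(\C^1(\U,\X),\|\cdot\|_{1,\U})$.

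Third, for \eqref{CL} I would invoke a functional CLT in $\C^1(\U, \X)$. The centered i.i.d.\ summands $\nabla_x\hph(\cdot;\Z_i)-\nabla\varphi$ are square integrable in $\C^1(\U,\X)$, because $\E\|\nabla_x\hph(\cdot;\Z)\|_{1,\U}^2\le 4\E[g(\Z)^2]<\infty$ by (S.3). The uniform Hessian bound in (S.3) furnishes an $L^2$-integrable Lipschitz constant for $\nabla_x\hph(\cdot;\Z)$, and the Lipschitz estimate (S.4) supplies the analogous control needed to carry out a standard chaining (or bracketing) argument for tightness of the rescaled partial sums in $\C^1(\U, \X)$. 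Combined with the classical multivariate CLT, which identifies every finite-dimensional marginal at fixed $x\in\U$ as Gaussian with covariance $\Sigma(x)$, this yields the claimed weak convergence to a centered Gaussian random element $\phi\in\C^1(\U,\X)$ with pointwise variance $\Sigma(\cdot)$.

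The main obstacle is precisely the tightness step in $\C^1(\U,\X)$: the norm simultaneously controls the gradient and the Hessian uniformly in $x\in\U$, so one must verify that second-moment integrability of $g$ together with the Lipschitz bound (S.4) is strong enough to equicontrol the modulus of continuity of $\sqrt{k}(\nabla\hph_k-\nabla\varphi)$ for all $k$. The finite-dimensionality of $\U$ and the $L^2$-Lipschitz constant provided by (S.3) are the decisive ingredients; alternatively, one may directly appeal to functional CLTs already developed for the stochastic programming setting (cf.\ \cite{KiR93} and \cite{SDR09}).
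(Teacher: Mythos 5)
Your first two steps are correct and in fact take a slightly different route from the paper: the paper obtains the almost sure convergence by applying the uniform law of large numbers of \cite[Theorem~9.60]{SDR09} componentwise to $\nabla_x\hph(\cdot;\xi)$ and $\nabla^2_{xx}\hph(\cdot;\xi)$ (uniform convergence of gradients and Hessians over $\U$ being exactly convergence in $\|\cdot\|_{1,\,\U}$), whereas you invoke the Banach-space strong law for the random element $\nabla_x\hph(\cdot;\Z)$ of the separable space $\C^1(\U,\X)$. With the envelope $2g(\Z)$ from (S.3) and the identification of the Bochner mean with $\nabla\varphi$ (evaluation of the function and of its derivative at a point are bounded linear maps, hence commute with the Bochner integral, and one then uses $\E[\nabla_x\hph(x;\Z)]=\nabla\varphi(x)$, $\E[\nabla^2_{xx}\hph(x;\Z)]=\nabla^2\varphi(x)$), this is a legitimate alternative to the paper's argument for that part.

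The genuine gap is in your primary argument for \eqref{CL}. Convergence in distribution in $\C^1(\U,\X)$ requires tightness in the full norm $\|\cdot\|_{1,\,\U}$, hence asymptotic equicontinuity not only of $\sqrt{k}(\nabla\hph_k-\nabla\varphi)$ but also of the Hessian process $\sqrt{k}\big(\tfrac1k\sum_{i=1}^k\nabla^2_{xx}\hph(\cdot;\Z_i)-\nabla^2\varphi\big)$ in the sup norm over $\U$. For the Hessian, Assumption~\ref{ass:S} supplies only the envelope bound $\|\nabla^2_{xx}\hph(x;\xi)\|\le g(\xi)$ with $\E[g(\Z)^2]<\infty$ and no modulus of continuity in $x$; a square-integrable envelope alone gives no control of $L^2$ bracketing or covering numbers of the class $\{\nabla^2_{xx}\hph(x;\cdot)\,:\,x\in\U\}$, so the chaining/bracketing step you describe cannot be completed for the derivative component of the $\C^1$ norm. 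Condition (S.4) does not help here: it is a Lipschitz condition on $\hph$ and $\nabla_x\hph$ only, and its constant $\ell(\Z)$ is merely integrable, so the ``analogous control'' you attribute to it is not available; what (S.3) does provide (via the Hessian envelope, on a convex neighborhood) is a square-integrable Lipschitz constant for $\nabla_x\hph(\cdot;\Z)$, which handles only the function component. Your fallback of appealing to the functional CLT developed for this setting is precisely the paper's proof, which deduces \eqref{CL} directly from \cite[Theorem~A.3]{KiR93}; so the workable version of your CLT step reduces to that citation, and a self-contained chaining proof would require an additional hypothesis (e.g., a Lipschitz or H\"older modulus for $\nabla^2_{xx}\hph(\cdot;\xi)$ with square-integrable constant) that is not part of Assumption~\ref{ass:S}.
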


\begin{proof}
Note that $\nabla\hph_k$ converging  with probability one to $\nabla\varphi$ in $\C^1(\U, \X)$ means that $\nabla\hph_k(x)$ and $\nabla^2\hph_k(x):= \tfrac{1}{k}\sum_{i=1}^k\nabla^2_{xx}\widehat\varphi(x; \Z_i)$ converge with probability one to $\nabla\varphi(x)$ in $\X$ and to $\nabla^2\varphi(x)$ in $\L(\X, \X)$, respectively, uniformly in $\U$.
Both assertions are an adaptation of \cite[Theorem~9.60]{SDR09} for $\nabla_x\hph(\cdot; \xi)$ and $\nabla^2_{xx}\hph(\cdot; \xi)$ that are finite-dimensional vector-valued mappings.
The claimed generalized central limit formula in \eqref{CL} follows from  \cite[Theorem~A.3]{KiR93}.
\end{proof}

We conclude this section with asymptotic distribution of the solution to the SAA generalized equation in \eqref{sge}.

\begin{Theorem}[asymptotic distribution]\label{thm:ad}
Suppose that Assumptions~{\rm\ref{ass:S}} and {\em\ref{ass:A}} hold for $\hph$ and $\varphi = \E[\hph(\cdot; \Z)]$ with respect to a bounded domain $\U$ in $\X$ containing $\ox$. 
Then, there exists a neighborhood $U\subset\U$ of $\ox$ such that, with probability one, for $k$ sufficiently large, the SAA generalized equation in \eqref{sge} admits a unique local solution ${\bf x}^k \in U$. 
Moreover, the sequence $\{{\bf x}^k\}_{k\in\N}$ converges to $\ox$ with probability one and satisfies the generalized central limit formula
\begin{equation}\label{CL1}
\sqrt{k}({\bf x}^k -\ox) \xrightarrow{\D} -\big(P_{T_\M(\ox)}\circ\nabla^2_{xx}L(\ox, \omu)\vert_{T_\M(\ox)}\big)^{-1}(P_{T_\M(\ox)}(\phi(\ox))),
\end{equation}
where $\phi\in \C^1(\U, \X)$ is taken from Lemma~{\rm\ref{lem:CL}} and $L$ and $\omu$ are from Assumption~{\rm\ref{ass:A}(A.3)}.
\end{Theorem}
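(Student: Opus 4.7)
The plan is to recast the SAA generalized equation \eqref{sge} as a parametric generalized equation amenable to Theorem~\ref{thm:GE}, using the stochastic gradient mapping itself as the perturbation parameter. Specifically, I set $\cP := \C^1(\U,\X)$ with base point $\op := \nabla\varphi$ and define $\psi:\cP\times\X\to\X$ by $\psi(p,x) := p(x)$. Then the true system \eqref{ge} reads $0 \in \psi(\op, x) + \sub\theta(x)$ with solution $\ox$, while the SAA system \eqref{sge} becomes $0 \in \psi(\nabla\hph_k, x) + \sub\theta(x)$. My first task is to verify the hypotheses of Theorem~\ref{thm:GE}(b) at $(\op, \ox)$. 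Condition (i) holds since $\nabla_x\psi(p,x) = \nabla p(x)$, which is jointly continuous in the norm $\|\cdot\|_{1,\U}$; condition (ii) is immediate from $\|\psi(p,x)-\psi(p',x)\| = \|p(x)-p'(x)\| \leq \|p-p'\|_{1,\U}$. The relative interior and $\C^2$-partial smoothness hypotheses are precisely (A.1) and (A.2). Setting $L_\theta(x,\mu):=\widehat\theta(x)+\la\mu,\Phi(x)\ra$ as in \eqref{lagf}, the Lagrangian $L$ in Assumption~\ref{ass:A}(A.3) equals $\varphi + L_\theta$, so $\nabla\opsi(\ox)+\nabla^2_{xx}L_\theta(\ox,\omu) = \nabla^2_{xx}L(\ox,\omu)$, whence \eqref{ge1} for $\opsi = \nabla\varphi$ reduces to \eqref{A.3}. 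Finally, $\psi$ is linear in $p$, so $\nabla_p\psi(\op,\ox)(q) = q(\ox)$, and the strong differentiability estimate \eqref{strongB} follows from $\|q(x)-q(\ox)\| \leq \|q\|_{1,\U}\|x-\ox\|$.

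Applying Theorem~\ref{thm:GE}(b) yields a neighborhood $U\subset\U$ of $\ox$ and a neighborhood $V\subset\cP$ of $\op$ such that, for every $p\in V$, the equation $0\in p(x)+\sub\theta(x)$ has a unique local solution $s(p)\in U$, with $s$ Lipschitz on $V$ and differentiable at $\op$ with
\begin{equation*}
\nabla s(\op)(q) = -DG(\ox, 0)^{-1}(q(\ox)), \qquad q\in\cP,
\end{equation*}
where $G := \nabla\varphi + \sub\theta$ and, by \eqref{Jloc1} combined with the Lagrangian identification above,
\begin{equation*}
DG(\ox,0)^{-1} = \big(P_{T_\M(\ox)}\circ \nabla_{xx}^2L(\ox, \omu)\vert_{T_\M(\ox)}\big)^{-1}\circ P_{T_\M(\ox)}.
\end{equation*}

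I then couple this implicit-function statement with Lemma~\ref{lem:CL}. Since $\nabla\hph_k \to \nabla\varphi$ with probability one in $\cP$, there is an event of probability one on which $\nabla\hph_k\in V$ for all sufficiently large $k$; on this event ${\bf x}^k := s(\nabla\hph_k)\in U$ is the unique local solution to \eqref{sge} in $U$, and ${\bf x}^k\to\ox$ follows from the continuity of $s$ at $\op$. For the limiting law \eqref{CL1}, I apply the functional delta method: since $s$ is differentiable at $\op$ and locally Lipschitz on $V$, and $\sqrt{k}(\nabla\hph_k-\nabla\varphi)\xrightarrow{\D}\phi$ in $\cP$ by Lemma~\ref{lem:CL}, the standard delta-method argument gives
\begin{equation*}
\sqrt{k}({\bf x}^k-\ox) = \sqrt{k}\big(s(\nabla\hph_k)-s(\op)\big) \xrightarrow{\D} \nabla s(\op)(\phi) = -DG(\ox,0)^{-1}\big(\phi(\ox)\big),
\end{equation*}
which is exactly \eqref{CL1}.

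The main obstacle is the rigorous invocation of Theorem~\ref{thm:GE}(b) in the infinite-dimensional Banach parameter space $\cP = \C^1(\U,\X)$. Although the statement of that theorem is formulated for a general normed parameter space, one should check that the underlying Dontchev--Rockafellar implicit mapping argument passes through without additional structure, and that the strong differentiability estimate \eqref{strongB} genuinely holds uniformly over $q$ in a $\cP$-neighborhood of zero (this uses only the Banach-space norm, so it goes through). A secondary subtlety is the application of the delta method with a Banach-space-valued perturbation, which is standard once $s$ is (Fr\'echet) differentiable at $\op$ and the limiting law $\phi$ is a tight Gaussian in $\cP$; measurability of ${\bf x}^k = s(\nabla\hph_k)$ follows from the continuity of $s$ on $V$ together with the measurability of $\nabla\hph_k$ established in Lemma~\ref{lem:CL}.
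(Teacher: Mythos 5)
Your proposal is correct and follows essentially the same route as the paper's own proof: the same evaluation-map parametrization $\psi(p,x)=p(x)$ over $\cP=\C^1(\U,\X)$ with $\op=\nabla\varphi$, the same verification that (A.1)--(A.3) deliver the hypotheses of Theorem~\ref{thm:GE} (including the Lagrangian identification reducing \eqref{ge1} to \eqref{A.3}), the same use of Lemma~\ref{lem:CL}, and a delta-theorem conclusion. The only inessential difference is that the paper invokes part (a) of Theorem~\ref{thm:GE} (semidifferentiability) together with the generalized delta theorem of \cite[Theorem~3.2]{Kin89}, whereas you invoke part (b) and the standard delta method; this is fine because $s$ is Lipschitz near $\op$ with a single-valued linear derivative there (hence Hadamard, not necessarily Fr\'echet, differentiable, which is all the delta method needs), and your direct continuity argument for ${\bf x}^k\to\ox$ is a harmless simplification of the paper's subsequence argument.
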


\begin{proof}
Let $\P=\C^1(\U, \X)$ and $\cP\subset \P$ an open subset containing $\op:=\nabla\varphi$.
Consider the evaluation mapping $\psi: \cP\times \U \to \X$, defined by $\psi(p, x) := p(x)$ for any $(p,x)\in \cP\times \U$.
This instance of $\psi$ clearly satisfies assumptions {\rm (i)} and {\rm (ii)} in Theorem~\ref{thm:GE} at $(\op, \ox)$.
It was also pointed out in \cite[Remark~4.2]{KiR92} that $\psi$ satisfies the assumption {\rm (a)} in Theorem~\ref{thm:GE} at $(\op, \ox)$  with
\begin{equation}\label{ad}
D_p\psi(\op, \ox)(q) = q(\ox)\quad \mbox{ for all }\; q\in \C^1(\U, \X).
\end{equation}
Let $U\subset \U$ and $\delta>0$ be the neighborhood of $\op$ and the constant taken from Theorem~\ref{thm:GE}.
In the event of the convergence $\nabla\hph_k \to \nabla \varphi$ in $\C^1(\U, \X)$, which happens with probability one as shown in Lemma~\ref{lem:CL}, Theorem~\ref{thm:GE} tells us that \eqref{sge} eventually admits a unique local solution ${\bf x}^k \in U$.
Compactness of $U$ implies that the sequence $\{{\bf x}^k\b$ has at least one cluster point, says $\widehat {\bf x}$. Thus, there is a subsequence 
$\{{\bf x}^{k_i}\}_{i\in \N}$ of $\{{\bf x}^k\b$ that ${\bf x}^{k_i}\to \widehat {\bf x}$. 
It follows from the local Lipschitz continuity  of the localization $s(\cdot) = U\cap S(\cdot)$ of the solution mapping
\begin{equation*}
S(p):= \big\{x\in \X\, \big|\, 0\in \psi(p, x)+\sub\theta(x)\big\}, \quad p \in \cP,
\end{equation*}
on $\B_\delta(\nabla\varphi)$ that 
\begin{equation*}
\widehat {\bf x} = \lim_{i\to \infty} {\bf x}^{k_i} = \lim_{i\to \infty}s(\nabla\hph_{k_i})  {=} s(\nabla\varphi) = \ox,
\end{equation*}
where the third equality holds with probability one. 
This tells us that all the convergent subsequences of $\{{\bf x}^k\}_{k\in \N}$ tend to $\ox$ with probability one and therefore the  sequence $\{{\bf x}^k\}_{k\in \N}$ converges to $\ox$ with probability one.
We can deduce from Theorem~\ref{thm:GE} together with \eqref{ad}  that the graphical localization $s$ of $S$ is almost surely semidifferentiable at $\op = \nabla \varphi$ for $\ox$ with
\begin{equation*}
Ds(\op)(q) = -\big(P_{T_\M(\ox)}\circ\nabla^2_{xx}L(\ox, \omu)\vert_{T_\M(\ox)}\big)^{-1}\big(P_{T_\M(\ox)}(q(\ox))\big), \quad q\in \C^1(\U, \X);
\end{equation*}
see \cite[Definition~3.1]{Kin89} for the definition of the  latter differentiability notion.
Applying \cite[Theorem~3.2]{Kin89} to the continuous mapping $s: \B_\delta(\nabla\varphi)\to U$ and the sequence $\{\nabla\hph_k\}_{k\in \N}$ that satisfies \eqref{CL} by Lemma~\ref{lem:CL}, we  obtain the generalized central limit formula \eqref{CL1}, which completes the proof.
\end{proof}

Note that  asymptotic distribution of SAA of a generalized equation was first established in \cite[Theorem~2.7]{KiR93} under subinvertibility and the single-valuedness of the inverse of the graphical derivative of the solution mapping; see also \cite[section~5.2.2]{SDR09} for a similar result for the KKT system of nonlinear programming problems. These assumptions are automatically satisfied under Assumption~\ref{ass:A}. Furthermore, the latter result assumes the almost sure convergence of the sequence $\{{\bf x}^k\}_{k\in\N}$ in Theorem~\ref{thm:ad}, a fact that was not assumed in our result and indeed was proven as a consequence of Assumption~\ref{ass:A}. The asymptotic distribution of SAA of a generalized equation was recently achieved in \cite[Theorem~3.1]{DDJ23} when the solution mapping to the generalized equation has a ${\cal C}^1$ single-valued graphical localization. According to Theorem~\ref{thm:GE}, our Assumption~\ref{ass:A} ensures such a property for the solution mapping to \eqref{ge}. However, similar to \cite[Theorem~2.7]{KiR93}, \cite[Theorem~3.1]{DDJ23} assumes the almost sure convergence of the sequence $\{{\bf x}^k\}_{k\in\N}$.
Note that while our framework is not as general as the one considered in \cite{DDJ23}, our approach clearly can be used for a generalized equation. 
Moreover, we can replace Assumption~\ref{ass:A} with the mere assumption that 
$\sub \th$ is strictly proto-differentiable and study the  asymptotic distribution of solutions to the SAA approximation of the generalized equation 
$$
0\in f(x) +\sub \th(x),
$$
where $f:\X\to \R$ is ${\cal C}^1$-smooth. Such a condition is weaker than assuming the solution mapping to the generalized equation has a ${\cal C}^1$ single-valued graphical localization, which was used in \cite{DDJ23}. We will pursue this issue in our future works.


\end{document}